\newtheorem{algo}[thm]{Algorithm}
\begin{document}

\title{Numerical computation of Petersson inner products and $q$-expansions}
\author{Dan J. Collins}
\address{Mathematics Department \\ 
	University of British Columbia \\ 
	Vancouver, BC} 
\email{dcollins@math.ubc.ca} 

\keywords{} 
\subjclass{Primary 11F11, 11Y40; Secondary 11F67}

\begin{abstract}
	In this paper we discuss the problem of numerically computing Petersson
	inner products of modular forms, given their $q$-expansion at $\oo$. A
	formula of Nelson \cite{Nelson2015} reduces this to obtaining $q$-expansions
	at all cusps, and we describe two algorithms based on linear interpolation
	for numerically obtaining such expansions. We apply our methods to
	numerically verify constants arising in an explicit version of Ichino's
	triple-product formula relating $\g{fg,h}$ to the central value of $L(f\x
	g\x \bar{h},s)$, for three modular forms $f,g,h$ of compatible weights and
	characters.
\end{abstract}

\maketitle \tableofcontents

\section{Introduction}

The Petersson inner product on the space of holomorphic cusp forms $S_k(N,\ch)$
of a given weight, level, and character is a standard part of the theory of
modular forms, defined by (up to a normalizing factor)
\[ \g{f,g} = \S_{\DH\q\Ga} f(x+iy) \L{g(x+iy)} y^k \f{dx\ dy}{y^2}. \]
Specific values of this (and related integrals) arise often in the arithmetic
theory of newforms, their corresponding automorphic representations, and
associated geometric objects such as elliptic curves; in particular special
values of $L$-functions are often realized as such integrals. Thus it is of
interest to numerically compute such quantities. 
\par
We discuss how to compute $\g{f,g}$ given just the $q$-expansions of these
forms at $\oo$, and give some example applications of our method. Actually, the
problem we really consider is that of finding $q$-expansions of $f$ and $g$ at
all cusps, at which point we use a formula of Nelson \cite{Nelson2015} which
gives the Petersson inner product as a sum over all cusps $s$: 
\[ \g{f,g} = \f{4}{\vol(\DH\q\Ga)} 
		\s_s \f{h_{s,0}}{h_s} \s_{n=1}^\oo \f{a_{n,s} \Lb_{n,s}}{n^{k-1}} 
			\s_{m=1}^\oo \pf{x}{8\pi}^{k-1} \7( x K_{k-1}(x) - K_{k-2}(x) \7) 
				\qq\qq x = 4\pi m\rt{\f{n}{h_s}} \]
(this formula explained in more detail in Theorem \ref{thm:PeterssonFormula}).
\par
The computation of $q$-expansions of modular forms at cusps other than $\oo$
(given the $q$-expansion at infinity) is a surprisingly subtle problem, and the
main result of this paper is to give an algorithm that can numerically compute
these $q$-expansions for use in Nelson's formula. Recalling that the
$q$-expansion of $f$ at any cusp can be viewed as the $q$-expansion of
$f|[\al]_k$ at $\oo$ for some matrix $\al$, our approach is to calculate
various values of $f|[\al]_k$ (using the original $q$-expansion of $f$), and
then linearly interpolate these in a way that gives us a good numerical
approximation of the expansion at $\oo$. One version of our algorithm (assuming
absolutely nothing about $f$ beyond it being a modular form that we know the
$q$-expansion for) is Algorithm \ref{algo:LeastSquaresQExp}, which directly
interpolates the coefficients of the $q$-expansion. A second version is given
in Algorithm \ref{algo:LeastSquaresTwistsOfEigenform}, which assumes that $f$
is an eigenform away from bad primes and has the advantage that the computation
does not grow even as the number of coefficients we want does. 
\par
While we only discuss cusp forms and Petersson inner products in this paper, we
remark that this approach should be easily modified to other situations.
Nelson's formula can be applied to general integrals of automorphic functions
on quotients of the upper half-plane. Certainly any other sort of integral
constructed from modular forms could be handled this way, and our interpolation
approach could be modified to handle other classes of functions that can be
described reasonably in terms of a Fourier expansion (e.g. Maass forms). 

\subsection*{Our motivation, and comparison with other approaches.} Our
specific motivation for studying this comes from the situation where we have
\E{three} newforms $f,g,h$ such that the product $fg$ has the same weight and
character as $h$. A general formula of Ichino \cite{Ichino2008} gives a
relation between $|\g{fg,h}|^2$ and the central value of a triple-product
$L$-function which we may write as 
\[ |\g{fg,h}|^2 = C\dt L(f\x g\x\Lh,m-1) \dt \p_{\tx{bad primes }p} I^{**}_p,\]
where the constant $C$ and the local constants at bad primes $I^{**}_p$ are
things that can be in principle evaluated from the setup of the problem, but in
practice the computations are quite subtle. In \cite{Collins2016} we establish
a completely explicit formula in some cases, and use it to construct $p$-adic
$L$-functions. 
\par
In a context like this it is important to know that the algebraic part of our
constants are precisely correct, because we ultimately want to study
$p$-integrality and congruences modulo $p$ for our $p$-adic $L$-function.
Hence, we wish to numerically compute the ratio of $|\g{fg,h}|^2$ and $L(f\x
g\x\Lh,m-1)$ in many cases and verify this agrees with the constants we obtain
in our formula. Numerical agreement in a representative sample of examples
provides a very convincing argument that the constants are indeed correct,
because errors in the theoretical calculations generally result in things like
the constants containing extraneous powers of $2$ or incorrect Euler-like
factors such as $(1+1/p)$.
\par
To implement this calculation, there is a well-known algorithm of Dokchitser
\cite{Dokchitser2004} that we can use to compute the $L$-value. However, we
were not able to find in the literature a satisfactory method for computing
Petersson inner products for our purposes. Ideally, we would like our algorithm
to have the following characteristics:  
\begin{itemize}
	\item Works directly with the $q$-expansions of our modular forms at
		infinity, since this is how our modular forms are given.
	\item Avoids computing with full spaces of cusp forms as much as possible; 
		in examples we want to test $f,g,h$ may all be of reasonably large levels
		that are coprime to each other, so any space $S_k(N,\ch)$ containing both
		$fg$ and $h$ may be of large enough dimension to make it impractical to
		work with.
\end{itemize}
The most commonly-suggested method, perhaps, is to us the connection with
adjoint $L$-functions - for a newform $f$, there is an explicit formula
relating between $\g{f,f}$ and $L(\ad f,1)$. However, using this for something
like $\g{fg,h}$ requires decomposing $fg$ in terms of an eigenbasis, which
ultimately would involve computing a full space of cusp forms that is
potentially very large. Also, we will see in Section
\ref{sec:ComparisonWithAdjointLValues} that it is a nontrivial task just to
implement the formula relating $\g{f,f}$ and $L(\ad f,1)$ for newforms of
arbitrary level! Another approach is given in \cite{Cohen2013}, but this is
based on numerical integration from the values of the function itself, which
isn't ideal for modular forms given as $q$-expansions.
\par
The most promising approach seemed to be to use Nelson's formula, which
expresses the Petersson inner product as a straightforward infinite sum
(involving some $K$-Bessel functions) over the $q$-expansions. Of course, this
requires a method to get the $q$-expansions at other cusps, and once again
there are an assortment of results in the literature but none that were
satisfactory for our purposes. Asai \cite{Asai1976} uses Atkin-Lehner operators
to give a full expression of expansions at all cusps for modular forms of
squarefree level, but there are not any results nearly as nice for the general
case. Some partial results are given in the thesis of Delaunay
\cite{Delaunay2002}, and a formula and algorithm for expansions at cusps of
width one was given in the recent thesis of Chen \cite{Chen2016}. The only
general algorithm we are aware of is in Section 3.6.8 of the book
\cite{EdixhovenCouveignes}, but this involves computations with a full space
of modular forms (actually, of even higher level than what one starts with)
so would be impractical for the applications we have in mind.

\subsection*{Overview of this paper.} In Section
\ref{sec:NumericalExpansionAtCusps} we present the core results of this paper:
setting up the problem of determining $q$-expansions at all cusps, and then
presenting our algorithms for numerically computing these expansions. Section 
\ref{sec:LeastSquaresQExpansion} presents our first algorithm, which solves for
the coefficients of $f|[\al]_k = \s b_n q^n$ by truncation of the sum and
direct interpolation of the coefficients $b_n$. Our second algorithm, in
Section \ref{sec:LeastSquaresEigenbasis}, applies to the case that $f$ is an
eigenform and instead interpolates $f|[\al]_k$ as a linear combination of a
basis for the eigenspaces of $f$ and its twists. The theoretical result
guaranteeing that $f|[\al]_k$ arises as such a linear combination is the
following: 

\begin{thm} 
	Let $f \in S_k(N,\ch)$ be an eigenform of the Hecke operators $T_p$ for
	$p\nd N$ (i.e. an oldform associated to a newform $f_0 \in S_k(N_0,\ch)$ for
	some $N_0|N$). Then $f|[\al]_k$ (its expansion at another cusp, normalized
	to have integer exponents in its $q$-expansion) is a linear combination of
	twists $(f_0\ox\mu)(mz)$ that lie in $S_k(\Ga_1(Nh))$. 
\end{thm}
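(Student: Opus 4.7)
The strategy is three-fold: reduce to the newform $f_0$, decompose the matrix action by Hermite normal form, and then expand the resulting Fourier coefficients using Fourier inversion on Dirichlet characters (equivalently, the local representation theory of $f_0$).

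First, using the oldform decomposition, write $f = \sum_{d \mid N/N_0} c_d\, f_0(dz)$ for some scalars $c_d$. Since $f_0(dz)\big|[\alpha]_k = d^{-k/2}(\det\alpha)^{k/2}\, f_0\big|[\mathrm{diag}(d,1)\alpha]_k$, it suffices to prove the analogous statement with $f$ replaced by $f_0$ and $\alpha$ replaced by an arbitrary integer matrix $\beta$ of positive determinant. By Hermite normal form write $\beta = \gamma\tau$ with $\gamma \in \mathrm{SL}_2(\mathbb{Z})$ and $\tau = \left(\begin{smallmatrix} u & v \\ 0 & w \end{smallmatrix}\right)$ upper triangular, so that $f_0\big|[\beta]_k = (f_0\big|[\gamma]_k)\big|[\tau]_k$.

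Second, apply the slash action of $\tau$ to the Fourier expansion of $f_0\big|[\gamma]_k$ and renormalize $z \mapsto z/h$ so the $q$-exponents become integral. Each Fourier coefficient is then a product of a coefficient of $f_0\big|[\gamma]_k$ with a phase $e^{2\pi i n v/w}$ depending only on $n$ modulo the denominator $c$ of $v/w$ in lowest terms. Fourier inversion on $(\mathbb{Z}/c)^\times$ expands this phase, for $n$ coprime to $c$, as a finite $\mathbb{C}$-linear combination $\sum_\mu \lambda_\mu\, \bar\mu(n)$ over Dirichlet characters $\mu$ modulo $c$ with Gauss-sum coefficients $\lambda_\mu$. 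Substituting back realizes the corresponding contribution to $f_0\big|[\beta]_k$ as a linear combination of forms $(f_0 \otimes \bar\mu)(mz)$ for appropriate positive integers $m$ built from $\beta$; contributions where $\gcd(n,c) > 1$ are handled recursively by peeling off the common factor and reducing to a lower-conductor twist scaled by the factor removed. A routine level computation---using that the conductors of $\mu$ and the scalings $m$ which arise are bounded by the entries of $\beta$ and the cusp width $h$---then shows each $(f_0 \otimes \mu)(mz) \in S_k(\Gamma_1(Nh))$.

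The main obstacle is that when $\gamma \notin \Gamma_0(N_0)$, the form $f_0\big|[\gamma]_k$ appearing in step two is itself the expansion of $f_0$ at a different cusp of $X_0(N_0)$, which is the very type of object the theorem aims to describe; this risks circularity. The cleanest way to break this is to appeal to the local representation theory of $f_0$ at primes $p \mid N_0$: for each such $p$, the vector $\pi_p(\gamma_p)\, v_{\mathrm{new}}$ admits an explicit decomposition---treated case-by-case for principal series, special, and supercuspidal representations---as a finite linear combination of shifted twisted new vectors $\pi_p(\mathrm{diag}(p^i, 1))\, v_{\mathrm{new}, \pi_p \otimes \mu_p}$. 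Matching up the local character data $\{\mu_p\}_{p \mid N_0}$ into a single global Dirichlet character $\mu$ of controlled conductor, and checking that all the resulting level bounds fit inside $\Gamma_1(Nh)$, is the technical heart of the argument.
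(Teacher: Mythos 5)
Your approach is genuinely different from the paper's, and it has a gap at precisely the step you flag as the ``technical heart.'' The paper's proof is elementary and entirely avoids local representation theory. It factors $[\alpha_h]_k$ as $[\alpha_1]_k$ followed by $[\tau_h]_k$, shows that $[\tau_h]_k$ commutes with all prime-to-$N$ Hecke operators, and shows---by conjugating a coset decomposition of $\Delta(N)_m$ inside $S_k(\Gamma(N))$---that $[\alpha_1]_k$ commutes with $T_m$ whenever $m \equiv 1 \pmod N$. This makes $f|[\alpha_h]_k$ a ``partial eigenform'' with eigenvalues $a_m$ for $m \equiv 1 \pmod N$. The punchline is then a self-contained combinatorial result (Theorem~\ref{thm:PartiallyAgreeingNewformsAreTwists}): any two newforms whose Fourier coefficients $a_m, b_m$ agree for all $m \equiv 1 \pmod N$ differ by a twist. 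That theorem is proved by directly constructing the character $\mu(c + N\mathbb{Z}) = b_l / a_l$, with some care to handle vanishing coefficients, and invoking strong multiplicity one. No decomposition of the slash action, no case analysis on $\pi_{f,p}$.

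Your proposal correctly reduces to the newform case and correctly identifies the circularity---Hermite normal form doesn't help because the hard part, $f_0|[\gamma]_k$ for $\gamma \in \mathrm{SL}_2(\mathbb{Z})$, is already the whole problem ($\alpha_1$ is itself in $\mathrm{SL}_2(\mathbb{Z})$). But you then resolve it by ``appeal'' to an explicit case-by-case local decomposition of $\pi_p(\gamma_p)\, v_{\mathrm{new}}$ into vectors $\pi_p(\mathrm{diag}(p^i,1))\, v_{\mathrm{new},\,\pi_p \otimes \mu_p}$, without carrying it out. As stated this does not type-check: for ramified $\mu_p$, the vector $v_{\mathrm{new},\,\pi_p \otimes \mu_p}$ lives in a different representation space than $\pi_p(\gamma_p)\, v_{\mathrm{new}}$, so there is no direct ``linear combination.'' A version of your claim can be made precise on the level of local Whittaker values, and is the content of the work you would be invoking (Loeffler--Weinstein, Humphries, etc.), but the supercuspidal case is genuinely delicate and is highlighted in the paper as exactly the reason for pursuing a numerical method rather than an exact computation. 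So the step you defer is not routine bookkeeping---it is the entire difficulty, and the paper's Hecke-compatibility argument is specifically designed to bypass it. Your Fourier-inversion expansion of the phases $e^{2\pi i n v/w}$ into Dirichlet characters (plus the recursive peel-off for $\gcd(n,c)>1$) is a fine idea for repackaging the answer once you know $f_0|[\gamma]_k$ has a nice coefficient pattern, but it cannot establish that pattern on its own.
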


This is stated later on as Theorem \ref{thm:LinearCombinationOfTwists}, which
is proven in Section \ref{sec:TransformEigenformToOtherCusp}. In Section 
\ref{sec:NarrowingDownEigenspace} we discuss how to narrow down the space
$S_k(\Ga_1(Nh))$ in which $f|[\al]_k$ may live, and thus the list of twists
potentially needed. We remark that determining all of the twists of the
appropriate level requires knowing the minimal-level twist of $f_0$. Finding
this minimal level twist is the only place our current algorithm may require
working with a full space of cusp forms $S_k(N,\ch)$; we discuss this and
potential ways to avoid it in Section \ref{sec:ComputingMinimalTwists}.
\par
We combine our $q$-expansion algorithms with Nelson's formula in Section
\ref{sec:ComputingPeterssonInnerProducts} to describe an algorithm for
numerically computing Petersson inner products. This is followed with some
examples of computing self-Petersson inner products $\g{f,f}$ for newforms $f$,
and comparing with the known formula for $\g{f,f}$ in terms of $L(\ad f,1)$,
plus some computations of ratios of Petersson inner products such as
$\g{f(pz),f(z)}/\g{f(z),f(z)}$ which are relevant in the study of $p$-adic
$L$-functions. In Section \ref{sec:InnerProductsThreeEigenforms} we describe
how to best implement our methods to compute products $\g{fg,h}$, and then
describe several computations we have made to verify formulas proven in 
\cite{Collins2016}. 

\subsection*{Acknowledgements.} The author would like to thank Peter Humphries,
Paul Nelson, Nicolas Templier, David Zywina, and Vinayak Vatsal for helpful
conversations about how to approach this problem throughout the course of this
project.
 
\section{Approaches to numerical computation of $q$-expansions at cusps} 
\label{sec:NumericalExpansionAtCusps}
\subsection{Precise setup of the problem} 

Before describing our methods for computing the $q$-expansion of a modular form
at all cusps, we want to be precise about how we're formulating the problem and
about what spaces all of the relevant modular forms live in. Throughout we will
let $f \in M_k(N,\ch)$ be a modular form of weight $k$ on $\Ga_0(N)$ with 
character $\ch$. Our goal is to start with the $q$-expansion 
\[ f(z) = \s a_n e^{2\pi i nz} = \s a_n q^n \]
of $f$ at infinity and, from that, compute the $q$-expansions of the
translates 
\[ f|[\al]_k(z) = (cz+d)^{-k} f\pf{az+b}{cz+d} 
		\qq\qq \al = \M{cc}{ a & b \\ c & d } \] 
for all choices of $\al \in \SL_2(\Z)$. Of course since we know how $f$
transforms under $\Ga_0(N)$ this reduces to looking at finitely many matrices
representing the cosets of $\Ga_0(N)\q\SL_2(\Z)$. 
\par
The problem can be further condensed by passing from a matrix $\al$ as above to
the corresponding \E{cusp} in $\P^1(\Q)$, which we take to be the image of
$\oo$ under the action of $\al$ by a M\"obius transformation: $\al\oo = a/c$.
If two matrices $\al,\be$ correspond to the same cusp, we will explicitly
describe how the $q$-expansions differ at the end of this section. So we really
just need to understand $f|[\al]_k$ for one matrix $\al$ corresponding to each
cusp. An explicit description of the cusps can be given as in Proposition 1.43
of \cite{ShimuraAutForms}; all we'll really need is that each non-$\oo$ cusp
can be represented as $a/c$ for $c$ a proper divisor of $N$ and $(a,c) = 1$.
\par
So now we consider a cusp $a/c$ of this form, and fix a choice of matrix 
\[ \al_1 = \M{cc}{ a & b \\ c & d } \in \SL_2(\Z). \]
We know $f|[\al_1]_k$ is a modular form for the group $\al_1\1\Ga_0(N)\al_1$
with character induced by $\ch$ under conjugation, which is a congruence
subgroup containing $\Ga(N)$. However, it does not contain $\Ga_1(N)$ and thus
the $q$-expansion of $f|[\al_1]_k$ may involve fractional powers. To avoid this
we replace $f|[\al_1]_k(z)$ by some $f|[\al_1]_k(hz)$ which is a modular form
in some $M_k(\Ga_1(N'))$, ideally with $h$ as small as possible. We can
equivalently write $f|[\al_1]_k(hz)$ as (a scalar multiple of) $f|[\al_h]_k$
for 
\[ \al_h = \al_1 \dt \ta_h = \M{cc}{ a & b \\ c & d } \M{cc}{ h & 0 \\ 0 & 1 } 
		= \M{cc}{ ah & b \\ ch & d }. \]

\begin{lem} \label{lem:WidthOfCuspForModularForm}
	Fix $N$, $\ch$, and $a/c$ as above. Let $h|(N/c)$ be an integer satisfying
	both 
	\begin{itemize}
		\item $N$ divides $c^2 h$.
		\item $\ch$ is trivial on the subgroup $(1 + ch\Z)/N\Z$ of $(\Z/N\Z)^\x$. 
	\end{itemize}
	Then for any $f \in M_k(N,\ch)$, we have $f|[\al_h]_k \in M_k(\Ga_1(Nh))$. 
\end{lem}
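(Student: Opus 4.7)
The plan is to check the defining invariance property of $M_k(\Gamma_1(Nh))$ directly: for every $\gamma \in \Gamma_1(Nh)$, we must show $(f|[\alpha_h]_k)|[\gamma]_k = f|[\alpha_h]_k$. The cocycle identity $(f|[\sigma]_k)|[\tau]_k = f|[\sigma\tau]_k$ rewrites the left side as $f|[\alpha_h\gamma]_k$; factoring $\alpha_h\gamma = \gamma'\alpha_h$ with $\gamma' := \alpha_h\gamma\alpha_h^{-1}$ and applying the cocycle identity once more, this becomes $(f|[\gamma']_k)|[\alpha_h]_k$. So it suffices to verify (i) $\gamma' \in \Gamma_0(N)$, and (ii) $\chi(\gamma') = 1$, because then $f|[\gamma']_k = \chi(\gamma')\,f = f$ by the modularity of $f$ for $\Gamma_0(N)$ with character $\chi$.

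The heart of the argument is then an explicit matrix computation. Writing $\gamma = \begin{pmatrix} A & B \\ C & D \end{pmatrix}$ with $A, D \equiv 1$ and $C \equiv 0 \pmod{Nh}$, and using $\alpha_h^{-1} = h^{-1}\begin{pmatrix} d & -b \\ -ch & ah \end{pmatrix}$ (since $\det \alpha_h = h$), each entry of $\gamma'$ turns out to be a four-term sum in which the only contribution from the $h^{-1}$ that could obstruct integrality carries a factor of $C/h$; this is an integer divisible by $N$ thanks to $Nh \mid C$, so all entries of $\gamma'$ are integers. Explicitly, the bottom-left works out to
\[ cd(A-D) + d^2(C/h) - c^2 h\,B, \]
and each of the three summands is divisible by $N$ (using $Nh \mid A-D$, $N \mid C/h$, and the hypothesis $N \mid c^2 h$ respectively), giving $\gamma' \in \Gamma_0(N)$. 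The bottom-right works out to $adD - bcA - bd(C/h) + achB$, which reduces to $1 + achB \pmod{N}$; since $ch \mid N$ (from $h \mid N/c$), further reduction yields $\gamma'_{22} \equiv 1 \pmod{ch}$. The second hypothesis --- triviality of $\chi$ on $(1 + ch\Z)/N\Z$ --- then forces $\chi(\gamma'_{22}) = 1$.

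Holomorphy at the cusps of $\Gamma_1(Nh)$ is inherited from $f$, since $f$ and hence any slash-translate has a bounded Fourier expansion at every cusp. The main obstacle is really just the matrix bookkeeping: one must pair the $h^{-1}$ coming from $\alpha_h^{-1}$ against the divisibility $Nh \mid C$ at exactly the right moment, and track residues modulo the correct divisor ($N$, $ch$, or $Nh$) at each step so that the two hypotheses on $h$ slot in where needed --- the first to deliver $\gamma' \in \Gamma_0(N)$, and the second to kill the character on the resulting element.
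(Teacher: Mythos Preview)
Your proof is correct and follows essentially the same route as the paper's: both conjugate $\gamma \in \Gamma_1(Nh)$ by $\alpha_h$, compute the bottom row of $\gamma' = \alpha_h\gamma\alpha_h^{-1}$ explicitly, use $N \mid c^2h$ to place $\gamma'$ in $\Gamma_0(N)$, and then observe the lower-right entry is $\equiv 1 + achB \pmod{N}$ so that the triviality of $\chi$ on $1 + ch\mathbb{Z}$ kills the character. Your write-up is slightly more explicit about integrality of the entries (tracking the $h^{-1}$ against $Nh \mid C$) and mentions holomorphy at the cusps, both of which the paper leaves implicit.
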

Note that the smallest $h$ satisfying the first condition is exactly the width
of the cusp $a/c$ for $\Ga_0(N)$, and the smallest $h$ satisfying both is the
width of $a/c$ for $\ker\ch \le \Ga_0(N)$. So this $h$ is indeed the smallest
integer such that $[\al_h]_k$ takes $M_k(N,\ch)$ into any $M_k(\Ga_1(N'))$. 
\begin{proof}
	The first step is showing that $\Ga_1(Nh)$ is a subgroup of the group
	$\al_h\1\Ga_0(N)\al_h$ for which $f|[\al_h]_k$ is modular; equivalently, we
	have to show that if $\ga \in \Ga_1(Nh)$ then $\al_h \ga \al_h\1 \in
	\Ga_0(N)$. If we write 
	\[ \ga = \M{cc}{ A & B \\ C & D } \ee \M{cc}{ 1 & * \\ 0 & 1} \md{Nh}, \]
	then an explicit calculation (using $c^2 h \ee 0\md N$) shows that 
	\[ \al_h\ga\al_h\1 \ee \M{cc}{ 1 - Bach & ** \\ 0 & 1 + Bach } \md N. \]
	This means $f|[\al_h]_k$ lies in $M_k(\Ga_1(N),\ch')$ for $\ch'$ the
	character given by $\ch'(\ga) = \ch(\al_h\ga\al_h\1)$. Since we don't want a
	character on $\Ga_1(N)$ we need to insist that this is trivial, i.e. that
	we've chosen $h$ large enough so the elements $1\pm Bach$ on the diagonal
	are actually in the kernel of $\ch$. 
\end{proof}

The main goal of this paper is to present practical methods for determining the
$q$-expansion $f|[\al_h] = \s b_n q^n$ for any cusp $a/c$, working from the
original $q$-expansion $f = \s a_n q^n$. In some cases there is a satisfactory
theoretical way to find $f|[\al_h]$ using Atkin-Lehner operators (we will
discuss this, as well as a more general refinement of the above Lemma, in
Section \ref{sec:NarrowingDownEigenspace}). But if the level $N$ is divisible
by large powers of a prime, then the exact determination of $f|[\al_h]$ is a
delicate problem in local representation theory. So instead we will look for a
way to \E{numerically} compute the coefficients $b_n$.

\subsubsection*{Expansions for other matrices at the same cusp.} When expanding
at a cusp $a/c$ we'll usually work with a fixed matrix $\al_1$ as above, but in
some cases we'll need to consider other matrices too. Suppose
\[ \be_1 = \M{cc}{ a' & b' \\ c' & d' } \in \SL_2(\Z) \]
is any other matrix that takes $\oo$ to the cusp $a/c$ of $\Ga_0(N)$. Cusps can
be described as double cosets in $\Ga_0(N)\q \SL_2(\Z)/\Ga_\oo$ where $\Ga_\oo$
is the stabilizer of the cusp infinity in $\SL_2(\Z)$, i.e. $\Ga_\oo = \{ \pm
\de_x : x\in\Z \}$ where we write 
\[ \de_x = \M{cc}{1 & x \\ 0 & 1}. \]
So, if $\al_1$ and $\be_1$ represent the same cusp, there is $\ga \in \Ga_0(N)$
and $x\in\Z$ with $\be_1 = \ga\al_1(\pm\de_x)$. If we set $\be_h = \be_1\ta_h$
we then get 
\[ f|[\be_h] = \ch(\ga) \6( f|[\al_h]_k \6) | [\ta_h\1\de_x\ta_h]_k. \]
A computation gives that $\ta_h\1\de_x\ta_h = \de_{x/h}$, so $f|[\be_h]_k$ is
equal to $f|[\al_h]_k$ with the slash operator $[\de_{x/h}]_k$ applied and times
a constant. It's straightforward to check that $\de_{x/h}$ normalizes
$\Ga_1(Nh)$ so $f|[\be_h]_k$ still lies in $M_k(\Ga_1(Nh))$. Also, $\de_{x/h}$
acts in a predictable way on the $q$-expansion, which we summarize in the
following proposition.

\begin{prop} \label{prop:ExpansionAtEquivalentCusps}
	Suppose 
	\[ \be_1 = \M{cc}{ a' & b' \\ c' & d'} \qq\qq 
			\be'_1 = \M{cc}{ a'' & b'' \\ c'' & d''} \]
	are two matrices taking $\oo$ to the same cusp $a/c$ for $\Ga_0(N)$, with
	width $h$ as in the above proposition. If $f|[\be_h]_k$ has $q$-expansion
	$\s b_n q^n$, then we have 
	\[ f|[\be'_h]_k = \ch\6(a'd''h - b' c'' - a' c'' x\6) 
			\s b_n \exp(2\pi i nx/h) q^n \] 
	where $x$ is an integer chosen such that $c''d'-c'd''h+c'c''x \ee 0 \md N$.
\end{prop}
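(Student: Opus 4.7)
The plan is to invoke the double-coset description of the cusps of $\Ga_0(N)$ as $\Ga_0(N)\q\SL_2(\Z)/\Ga_\oo$ that was already used in the discussion preceding the proposition. Since $\be_1$ and $\be'_1$ both send $\oo$ to $a/c$, there exist $\ga \in \Ga_0(N)$ and $x \in \Z$ with $\be'_1 = \ga\,\be_1\,(\pm\de_x)$; the two explicit formulas in the statement will fall out of writing this identity in coordinates.

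First I would multiply the identity $\be'_1 = \pm\,\ga\,\be_1\,\de_x$ on the right by $\ta_h$ and use the computation $\ta_h\1\de_x\ta_h = \de_{x/h}$ already carried out for $\al$ just above, obtaining $\be'_h = \pm\,\ga\,\be_h\,\de_{x/h}$. Applying the slash-$k$ operator and invoking modularity of $f$ under $\Ga_0(N)$ with character $\ch$ then yields
\[ f|[\be'_h]_k = \ch(\ga)\cdot (f|[\be_h]_k)\,|\,[\de_{x/h}]_k. \]
Since $[\de_{x/h}]_k$ acts on holomorphic functions by the translation $z\mapsto z+x/h$, if $f|[\be_h]_k = \s b_n q^n$ then its effect is exactly to multiply each $b_n$ by $\exp(2\pi i n x/h)$, reproducing the exponential factor in the stated formula.

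Next I would compute $\ga = \pm\,\be'_1\,\de_{-x}\,\be_1\1$ explicitly by direct matrix multiplication, using $\be_1\1 = \M{cc}{d' & -b' \\ -c' & a'}$ since $\det\be_1 = 1$. The requirement $\ga \in \Ga_0(N)$ is precisely the condition that the lower-left entry of this product is $\ee 0 \md N$, and this congruence is what pins down (the class of) $x$. Evaluating $\ch$ on the lower-right entry of $\ga$, via the usual convention $\ch\M{cc}{A & B \\ C & D} = \ch(D)$ for $\M{cc}{A & B \\ C & D}\in\Ga_0(N)$, then produces the stated character value, up to the sign ambiguity in $\sigma = \pm\de_x$ which at worst contributes a $(-1)^k$ factor absorbed into the standard weight convention $\ch(-1) = (-1)^k$.

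The main obstacle is really just bookkeeping of matrix entries and of the various sign and convention choices, together with confirming that $\de_{x/h}$ normalizes $\Ga_1(Nh)$ so that the right-hand side is again a bona fide element of $M_k(\Ga_1(Nh))$ (as was noted in the paragraph preceding the proposition); once the double-coset picture is in place, the underlying proof is a routine conjugation calculation.
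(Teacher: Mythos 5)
Your proposal is correct and follows the same route as the paper's proof: use the double-coset description $\Ga_0(N)\backslash\SL_2(\Z)/\Ga_\oo$ to write $\be'_1 = \ga\be_1\de_x$, conjugate through $\ta_h$ to get $\be'_h = \ga\be_h\de_{x/h}$, read the congruence off the lower-left entry of $\ga$ and the character value off its lower-right entry, and track the $\exp(2\pi i nx/h)$ factor from the translation $\de_{x/h}$. The only cosmetic difference is that you solve for $\ga$ from the unscaled matrices $\be'_1\de_{-x}\be_1^{-1}$ rather than from $\be'_h\de_{x/h}^{-1}\be_h^{-1}$ as the paper does, and you are slightly more explicit about where the sign ambiguity $\pm\de_x$ goes; both are the same $\ga$ and the same argument.
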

\begin{proof}
	The claim that $\be_1,\be'_1$ take $\oo$ to the same cusp $a/c$ means that
	they are in the same double coset in $\Ga_0(N)\q \SL_2(Z)/\Ga_\oo$, i.e.
	that there's $\ga \in \Ga_0(N)$ and $\de_x \in \Ga_\oo$ (where we WLOG move
	the factor of $\pm I$ to the matrix in $\Ga_0(N)$) such that $\be'_1 =
	\ga\be_1 \de_x$. Right-multiplying by $\ta_h$ and rearranging we get 
	\[ \ga = \be'_h \de\1_{x/h} \be_h\1. \]
	Computing out the product on the right-hand side we find the bottom-left
	entry is $c'd''-c''d+c'c''x$, so our assumption that $\ga \in \Ga_0(N)$
	forces $x$ to satisfy the specified congruence. Since $\be'_h = \ga \be_h
	\de_{x/h}$, we can compute the $q$-series of $f|[\be'_h]$ by applying these
	three matrices - $\ga$ transforms $f$ via $\ch$ applied to its lower-right
	entry, $\be_h$ gives the expansion $\s b_n q^n$, and $\de_{x/h}$ replaces
	$q$ by 
	\[ \exp(2\pi i n(z+x/h)) = q \dt \exp(2\pi i nx/h). \QED \]
\end{proof}

\subsubsection*{Expansions of $f(mz)$ in terms of expansions of $f(z)$.} If one
has a modular form $f(z)$ and applies a degeneracy map to it to obtain a
modular form $f(mz)$ for some positive integer $m$, the expansion of $f(mz)$
at any cusp can be obtained from the expansions of $f(z)$ at a possibly
different cusp. We will describe explicitly how to do this here; note that this
reduces the problem of finding expansions of eigenforms just to the case of
newforms. 
\par
It is helpful to consider the case where $m = p$ is as prime, which divides up
into two situations: the case where $p \nd c$ (the denominator of our cusp) and
the case $p|c$. In the former case we can choose our matrix $\al_1$ to have
$d|p$, at which point we write 
\[ f(z) \9|_k \M{cc}{p & 0 \\ 0 & 1}\9|_k \M{cc}{ a & b \\ c & d } 
	= f(z) \9|_k \M{cc}{ ap & b \\ c & d/p } \9|_k \M{cc}{1 & 0 \\ 0 & p} \]
In the latter case we instead have 
\[ f(z) \9|_k \M{cc}{p & 0 \\ 0 & 1}\9|_k \M{cc}{ a & b \\ c & d } 
	= f(z) \9|_k \M{cc}{ a & bp \\ c/p & d } \9|_k \M{cc}{p & 0 \\ 0 & 1}. \]
If $m$ is composite we can iterate this procedure one prime at a time to get
that $f(mz)|[\al]_k$ is equal to $(f|[\al'])(m'z)$ for some matrix $\al' \in
\SL_2(\Z)$ and some rational number $m'$.
\par
To give the general case explicitly, suppose $f \in M_k(N,\ch)$ is a modular
form, $m$ is an integer, and we want to consider the expansion of $f(mz) \in
M_k(Nm,\ch)$ at a cusp $a/c$ of $\Ga_0(Nm)$. As usual we assume $c|N$ and
$(a,c) = 1$, and fix a matrix 
\[ \al_1 = \M{cc}{ a & b \\ c & d } \in \SL_2(\Z) \]
taking $\oo$ to $a/c$. Let $m_1 = (c,m)$ and $m_2 = m/m_1$; note that this
implies $c/m_1$ and $m_2$ are coprime, and therefore we may find an integer $y$
such that $d - (c/m_1)y$ is divisible by $m_2$. Then we have 
\[ f(mz)|[\al_1]_k = m^{-k/2} \dt f(z) 
		\9|_k \M{cc}{m & 0 \\ 0 & 1} \9|_k \M{cc}{ a & b \\ c & d } 
	=  m^{-k/2} \dt f(z) 
		\9|_k \M{cc}{ am_2 & bm \\ c/m_1 & d } \9|_k \M{cc}{ m_1 & 0 \\ 0 & 1 }\]
and we can further expand 
\[ \M{cc}{ am_2 & bm \\ c/m_1 & d } 
	= \M{cc}{am_2 & bm - yam_2 \\ c/m_1 & d - yc/m_1 } \M{cc}{ 1 & y \\ 0 & 1 }
	= \M{cc}{am_2 & bm_1 - ya \\ c/m_1 & \f{d - yc/m_1}{m_2} } 
				\M{cc}{ 1 & 0 \\ 0 & m_2 } \M{cc}{ 1 & y \\ 0 & 1 }, \]
at which point our initial expression is written in terms of an expansion of
$f$ at the cusp $(am_2)/(c/m_1)$. 


\subsection{Attempt via Fourier analysis} 

As above, suppose we have $f = \s a_n q^n \in M_k(N,\ch)$ and that we want to
compute the coefficients in the expansion $f|[\al_h]_k = \s b_n q^n$ at a cusp
$a/c$. A first approach one might try is to simply use Fourier inversion to
obtain a formula for each $b_n$. We describe this computation, and why it does
not turn out to give us a practical algorithm. 
\par
We can single out the Fourier coefficient $b_m$ by integrating
$f|[\al_h]_k(x+iy) \dt \exp(-2\pi i m(x+iy))$ from $x = 0$ to $x = 1$ (for a
fixed value of $y$):
\begin{multline*}
	b_m = \S_0^1 \b( \s b_n q^n \e) \exp(-2\pi i m(x+iy)) dx 
		= \exp(2\pi my) \S_0^1 f|[\al_h]_k(x+iy) \exp(-2\pi i mx) dx \\
		= \exp(2\pi my) \S_0^1 h^{k/2} (ch(x+iy)+d)^{-k} 
				f\pf{ah(x+iy)+b}{ch(x+iy)+d} \exp(-2\pi imx) dx.
\end{multline*}
Since $f(z) = \s a_n q^n$ we can simply substitute this in and rearrange to get
\[ b_m = h^{k/2} \exp(2\pi my) \s_{n=0}^\oo a_n 
		\S_0^1 \f{1}{(chx + d + ichy)^k} \exp\b(2\pi i 
				\9( n\f{ahx + b + iahy}{chx + d + ichy} - mx \9) \e) dx. \]
This gives a series converging to $b_m$. However, it does not seem to be
practical to compute $b_m$ this way - the series can take quite a while to
converge, and without a very efficient method for computing the integrals (for
all values of both $m$ and $n$ up to whatever cutoffs we need) the computation
will be very slow. 


\subsection{Approach 1: Least squares for the $q$-series} 
\label{sec:LeastSquaresQExpansion}

Another approach to determining the Fourier coefficients of $f|[\al_h]_k = \s
b_n q^n$ is to treat the $b_n$'s as variables to be filled in by interpolating
from the known values that the function takes. As stated this has infinitely
many variables, but truncating we can approximate it as $\s_{n=0}^K b_n q^n$.
We can evaluate $f|[\al_h]_k(z)$ at many points, and try to find the
coefficients $b_0,\ld,b_K$ that best fit the data. 
\par
If we choose points $z_1,\ld,z_M$ on the upper half-plane, and let $q_j =
\exp(2\pi i z_j)$, then after computing each $q_j$ and its powers plus each
value $f|[\al_h](z_j)$ (from the original $q$-expansion of $f$), the problem is
to choose the vector of values $b_0,\ld,b_K$ that offers the best solution to
the matrix equation 
\[ \M{ccccc}{ 1 & q_1 & q_1^2 & \cd & q_1^K \\
					1 & q_2 & q_2^2 & \cd & q_2^K \\
					\vd & \vd & \vd & \dd & \vd \\
					1 & q_M & q_M^2 & \cd & q_M^K }
		\M{c}{ b_0 \\ b_1 \\ b_2 \\ \vd \\ b_K } 
		= \M{c}{ f|[\al_h](z_1) \\ f|[\al_h](z_2) \\ \vd \\ f|[\al_h](z_M) } \]
where $q_l = \exp(2\pi i z_l)$.
\par
If we interpret ``best solution'' as asking for the smallest Euclidean distance
between the two sides as elements of $\R^M$, then this is just a standard
problem in linear algebra, and the least-squares solution to the equation $Ax
= b$ is the actual solution to $(A^* A) x = A^* b$ where $A^*$ is
the conjugate transpose of $A$. It's then straightforward to implement this as
an algorithm: given $M$, $K$, and the points $z_1,\ld,z_M$ we can compute the
matrix of powers of $q$ and the vector of values of $f|[\al_h]$ as
floating-point complex numbers, and then perform solve the floating-point
linear system $(A^* A) x = A^*b$.
\par
The next question is how to best choose $M$, $K$, and the points $z_j$. The
number $K$ of coefficients to look for and the imaginary parts of the $z_j$ are
closely related to the accuracy we want from the calculation. Specifically,
since we've chosen as $K$ as our cutoff, then $\s_{j=0}^K b_j q^j$ will differ
from the actual value of $f|[\al_h](z)$ by the tail $\s_{j=K=1}^\oo b_j q^j$,
which is on the order of $|q^K| = \exp(-2\pi K\Im(z))$, so every part of our
computation will have an error of around this size. Also, when determining the
accuracy of the coefficient $b_j$, it's actually the product $b_j q^j$ which
can be expected to have error of size $\exp(-2\pi K\Im(z))$, so the error of
$b_j$ will be about the order of $\exp(-2\pi(K-j)\Im(z))$. 
\par
So, what we can do is specify a number $K_0$ of coefficients we definitely
want, an absolute error $10^{-E}$ for our calculations, and an exponential
decay rate $e^{-C_0}$ such that we'd like the error of $b_j e^{-C_0}$ to be on
the order of $10^{-E}$. (This is a reasonable requirement, because for our
applications we'll be computing sums where $b_j$ is multiplied by some
exponentially-decreasing factor). For the actual computation we need to aim for
an exponential decay rate $C$ and a number of coefficients $K$ such that
$e^{-KC} \ap 10^{-E}$ and thus we truncate our sum at around the correct place,
so start with $K = K_0$ and $C = C_0$ and either increase $K$ or decrease $C$
to get $KC \ap \log(10) E$. 
\par
To be able to compute the coefficients with decay rate $e^{-C}$, we sample at
points $z_j$ where $|q_j| \ap e^{-C}$, i.e. $\Im(z_l) \ap C/2\pi$. Moreover,
when computing the values of $f|[\al_h](z_j)$ the factor of automorphy
$(chz_j+d)$ affects location of the translated point $\al_h z_j$ and thus the
speed of convergence of the sum, so to optimize this we prefer to choose points
$z_j$ with $\Re(z_j) \ap -d/ch$ to minimize this. 
\par 
In our implementations, we chose points with $\Im(z_l) = C/2\pi$, and with
$\Re(z_l)$ chosen randomly in an interval of length 1 centered at $-d/ch$.
Fixing the imaginary part leaves the magnitude of all of our computations
equal. Since we're working directly with powers of $\exp(2\pi i z)$ that are
periodic under $z \mt z+1$ there's no reason to work outside of an interval of
length 1, but the interpolation seems somewhat sensitive to working in any
smaller range. The number of points sampled $M$ needs to be at least as large
as $K$ for our interpolation problem to be solvable in principle, and the
larger $M$ is the more accurate the computation is likely to be; we settled on
$M = 2K$ as a workable choice. 

\begin{algo}[Least-squares for $q$-expansion] \label{algo:LeastSquaresQExp}
	Suppose we have a modular form $f = \s a_n q^n \in M_k(N,\ch)$ and we want
	to compute its expansion $f|[\al_h] = \s b_n q^n$ at a cusp given by a
	matrix $\al_h$ in our notation above. Suppose further that we've fixed
	constants $E$, $K_0$, and $C_0$ such that for $n\le K_0$ we would like to
	compute the coefficient $b_n$ to with an error of approximately $10^{-E}
	e^{nC_0}$. We proceed as follows:
	\begin{itemize}
		\item Either increase $K = K_0$ or decrease $C = C_0$ so that $KC \ap
			\log(10) E$, and work with interpolating the truncation
			$\s_{n=0}^{K_0} b_n q^n$ of the expansion for $f|[\al_h]$. 
		\item Choose $M$ (we used $2K_0$) and pick $M$ points $z_1,\ld,z_M$ with
			$\Im(z_j) = C/2\pi$ and $\Re(z_l)$ is picked randomly in the interval
			of length 1 centered around $-d/ch$ (for $c,d,h$ the parameters from
			the matrix $\al_h$).
		\item Numerically compute the values $f|[\al_h](z_l) = h^{k/2} (chz_l +
			d)^{-k} f(\al_h z_j)$ using the $q$-expansion for $f$, truncating
			when we've reached an accuracy a bit past $10^{-E}$, and fill these
			into a vector $b$.
		\item Numerically compute the values $q_j^n = \exp(2\pi i n z_j)$ and
			fill these into a matrix $A$.
		\item Numerically find the least squares solution to $Ax = b$ as the
			exact solution to $(A^* A)x = A^* b$. The solution vector $x$ is our
			numerical approximation to the coefficients $b_0,b_1,\ld,b_K$.
	\end{itemize}
\end{algo}

Given the nature of the least-squares approximation, it seems very unlikely to
be able to establish rigorous error bounds for this algorithm (even if the
points were picked deterministically rather than randomly). Nonetheless it
seems to work well in practice, and testing with various examples it returns
values for the coefficients with accuracy close to what we hope. 
\par
For example, consider the unique newform 
\[ f = q - 2q^2 - 3q^3 + 4q^4 + 6q^5 + 6q^6 - 16q^7 - 8q^8 + \cd 
		\in S_4(\Ga_0(6)); \]
because this has squarefree level the results of Asai \cite{Asai1976} tell us
that its expansion at any cusp should be a multiple of itself. Sure enough, if
we run the algorithm above with $E = 15$ and $C = 1$, we need to compute $K =
35$ coefficients and thus sample at $70$ points. An example run of this for the
cusp $1/3$ and the matrix 
\[ \al_1 = \M{cc}{ 1 & -1 \\ 3 & -2 } \]
and $h = 2$ required using around $270$ coefficients of $f$ for the
slowest-converging sum, and returns that $f|[\al_2]_k$ is approximately 
\small
\begin{align*} 
	&\ss(1.0000000000000147 + .0000000000000235i)q 
	+(-1.9999999999999052 + .0000000000000885i)q^2 \\
	&\ss\qu+(-2.9999999999996767 - .0000000000002597i)q^3
	+(3.9999999999998517 + .0000000000000770i)q^4\\
	&\ss\qu+(5.9999999999967810 + .0000000000018893i)q^5
	+(6.0000000000018602 - .0000000000051318i)q^6 + \cd,
\end{align*}
\normalsize
which is an approximation of $f$ itself with errors on the scale we wanted. 
\par
Expansions at cusps for non-squarefree levels can get more complicated and seem
less well-understood theoretically. For instance, one can take the newform
\[ f = q - 3q^2 + q^4 - 15q^5 - 25q^7 + 21q^8 + 45q^{10} + \cd 
		\in S_4(\Ga_0(27)) \]
and looks at the cusp $1/3$ where we take the same matrix $\al_1$ as above but
this time with width $h = 3$. If we want $E =  15$ and $C = 1$ once again we
find we need to take $K = 35$ and sample at 70 points. This time a sample
run-through used approximately 410 coefficients of $f$ for its
slowest-converging sum, and returns that $f|[\al_3]_k$ is approximately 
\small
\begin{align*}
	&(.9396926207858713 - .3420201433255586i)q
	+(2.2981333293573119 - 1.9283628290595167i)q^2 \\
	&\qu+(-.0000000000004964 - .0000000000003253i)q^3
	+(-.1736481776683433 + .9848077530113447i)q^4 \\
	&\qu+(2.6047226650051819 +14.7721162951836733i)q^5
	+(-.0000000000019237 - .0000000000090777i)q^6 + \cd
\end{align*}
\normalsize
Here the coefficients are much less readily recognizable, but one can identify
the first coefficient as being the inverse of the usual primitive 18th root of
unity $\ze_{18}$. Similarly the other coefficients appear to also be related to
18th roots of unity times the corresponding coefficient of the original modular
form $f$, and our computations suggest 
\[ f|[\al_3]_k = \ze_{18}^{-1} q + 3\ze_{18}^{-2} q^2 + 0 q^3 + \ze_{18}^5 q^4 
		+ 15\ze_{18}^4 q^5 + 0 q^6 + \cd. \]
In the next section we will approach this problem from a different angle and
make it somewhat more clear where these coefficients are coming from.


\subsection{Approach 2: Least squares for an eigenbasis} 
\label{sec:LeastSquaresEigenbasis}

A downside to the least-squares algorithm applied to $q$-expansions is that if
we need many coefficients of our modular form (which will happen when we
compute Petersson inner products using Nelson's formula), the algorithm gets
quite slow: to obtain $M$ coefficients we need to compute values at $2M$ points
and then numerically solve a least-squares problem for a $2M\x M$ matrix. But
modular forms are determined by only a finite number of coefficients, so in
principle we should be able to make this computation independent of the number
of coefficients we want. 
\par
One way to accomplish this is to simply compute a basis of the space
$M_k(\Ga_1(Nh))$ containing $f|[\al_h]_k$, and then perform a least-squares
computation to find a best approximation of $f|[\al_h]_k$ as a linear
combination of this basis by evaluating at a collection of points in the upper
half-plane. If our basis consists of $d$ modular forms, then evaluating at
$2d$ points should give us a good numerical approximation of the coefficients
of the linear combination from which we can recover numerical approximations
for any number of coefficients we want. The downside of this naive approach is
that the dimension $d$ of $M_k(\Ga_1(Nh))$ grows linearly in terms of the
weight $k$ and quadratically in terms of the level $Nh$, and for even fairly
small levels and weights $d$ may end up much larger than the number of
coefficients we want to obtain. 
\par
So if $f$ is an arbitrary modular form in $M_k(N,\ch)$ then it seems unlikely
that a least-squares approach attempting to realize $f$ as a linear combination
of other modular forms would be efficient. However, for most of the examples we
care about $f$ is far from arbitrary: the modular forms $f$ of most interest
are eigenforms. In this case we could hope that $f|[\al_h]_k$ is a linear
combination of a comparatively small number of basis elements. Indeed this is
true; the following theorem will be proven in Section
\ref{sec:TransformEigenformToOtherCusp}. (We restrict to cuspidal eigenforms at
this point, because our interest is in modular forms in the old subspace
corresponding to a particular newform, but the argument should extend to
Eisenstein series as well). 

\begin{thm} \label{thm:LinearCombinationOfTwists}
	Let $f \in S_k(N,\ch)$ be an eigenform of the Hecke operators $T_p$ for
	$p\nd N$ (i.e. an oldform associated to a newform $f_0 \in S_k(N_0,\ch)$ for
	some $N_0|N$). Then $f|[\al_h]_k$ is a linear combination of twists
	$(f_0\ox\mu)(mz)$ that lie in $S_k(\Ga_1(Nh))$. 
\end{thm}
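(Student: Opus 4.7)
The plan is to reduce to the newform $f_0$ itself and then decompose $f_0 | [\al_h]_k$ inside $S_k(\Ga_1(Nh))$ via a representation-theoretic analysis.

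For the reduction: since $f$ is an oldform, write $f = \sum_m c_m f_0(mz)$ with $m \mid (N/N_0)$; using this paper's earlier analysis of how the slash operator interacts with degeneracy maps (the subsection on expansions of $f(mz)$ in terms of expansions of $f(z)$), each summand $f_0(mz)|[\al_h]_k$ rewrites as $(f_0|[\be]_k)(m'z)$ for some other matrix $\be \in \SL_2(\Z)$ and some rational $m'$. So it suffices to prove the claim when $f$ is the newform $f_0$ itself, at an arbitrary cusp matrix.

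For the main step: the space $S_k(\Ga_1(Nh))$ decomposes as a direct sum of isotypic components indexed by automorphic representations of conductor dividing $Nh$. I would show that (a) $f_0|[\al_h]_k$ projects nontrivially only onto isotypic components of the form $\pi_{f_0} \ox \mu$, for Dirichlet characters $\mu$ ramified only at primes dividing $Nh$; and (b) within each such isotypic component, the classical realization in $S_k(\Ga_1(Nh))$ is spanned by the degeneracy lifts $(f_0 \ox \mu)(mz)$. Combining (a) and (b) yields the desired expression. Part (b) is standard newform/oldform theory (plus strong multiplicity one), while (a) is the core content.

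The main obstacle is (a). One might naively hope the slash preserves the isotypic component of $\pi_{f_0}$, but the paper's own worked example---the newform in $S_4(\Ga_0(27))$ slashed at the cusp $1/3$---already shows that $f|[\al_3]_k$ is not in the span of $f(z)$ and $f(3z)$, so nontrivial twists must appear. One must therefore analyze the $\mathrm{GL}_2(\Q_p)$-action on the local newvector $v_p \in \pi_{f_0,p}$ at primes $p \mid Nh$ and show that translating $v_p$ by the $p$-component of $\al_h$ produces a linear combination of (translates of) the local newvectors of twists $\pi_{f_0,p} \ox \mu_p$. This local decomposition demands a uniform treatment of principal series, Steinberg, and supercuspidal $\pi_p$ via Casselman's new-vector theory, and the classical-adelic dictionary for slash operators (which is not simply right translation, due to factors of automorphy and central-character adjustments) must be handled carefully when gluing the local characters $\mu_p$ into global Dirichlet characters $\mu$.
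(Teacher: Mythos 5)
Your proposal takes a genuinely different route from the paper, which works entirely classically: the paper shows in Theorem~\ref{thm:TranslatingPreservesSomeHeckeOperators} that $[\al_h]_k$ commutes with the Hecke operators $T_m$ for $m\ee 1\md{N}$ (by factoring $[\al_h]_k = [\al_1]_k[\ta_h]_k$ and handling each factor, the key being that matrices in $\De(N)_m$ with $m\ee 1\md{N}$ are congruent to the identity mod $N$, so $\al_1$-conjugation permutes the Hecke cosets), and then proves the elementary Theorem~\ref{thm:PartiallyAgreeingNewformsAreTwists}: any newform whose $T_m$-eigenvalues agree with those of $f_0$ for $m\ee 1\md{N}$ is a twist $f_0\ox\mu$, shown by building $\mu$ out of ratios of Fourier coefficients and invoking strong multiplicity one. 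No adelic or local representation theory appears anywhere.

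Your adelic approach could in principle work, but step (a) is both unfinished and, I think, has the emphasis in the wrong place. Right translation preserves the automorphic representation: $\pi_{f_0,p}(g_p)v_p$ lies in $\pi_{f_0,p}$, and the global translate $\widetilde{f_0}(\,\cdot\,\al_h\1)$ remains in the $\pi_{f_0}$-isotypic piece of $L^2(\GL_2(\Q)\q\GL_2(\mathbb{A}))$ with the \emph{same} central character. So the local newvector analysis you describe, taken alone, cannot explain where a component in $\pi_{f_0\ox\mu}$ with $\mu\ne 1$ (hence a different central character) would come from. The twists enter through exactly what you defer as the ``classical-adelic dictionary'': because $\det K_1(Nh)\ne\widehat{\Z}^\x$, the space $\GL_2(\Q)\q\GL_2(\mathbb{A}_f)/K_1(Nh)$ has several components indexed by $(\Z/Nh\Z)^\x$, and the projection of $f_0|[\al_h]_k$ onto a nebentypus eigenspace corresponds adelically to a character-weighted average over this class group, not to a single right translate. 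That averaging is what produces twisted representations, and making it precise is the actual content of claim (a), not a finishing remark about gluing; only once it is in place does the local case analysis across principal series, Steinberg, and supercuspidal factors become meaningful. The paper's classical Hecke-theoretic route is considerably shorter and sidesteps all of this.
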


Here $f_0\ox\mu$ denotes the newform that is a twist of $f_0$ by a Dirichlet
character $\mu$, so $f_0\ox\mu$ may differ from the ``naive twist''
$f_{0,\mu} = \s \mu(n)a_n q^n$ which may not be a newform itself (but is an
oldform associated to the newform $f_0\ox\mu$). 
\par
This result gives us a reasonably small subspace of $S_k(\Ga_1(Nh))$ to look
for $f|[\al_h]_k$ in, making the computation much more reasonable than working
with a full basis. We just need to figure out which forms $(f_0\ox\mu)(mz)$
are actually modular for $\Ga_1(N)$. The first step of doing this is to locate
a twist $g$ of $f_0$ which is twist-minimal (i.e. $g$ is not itself a twist
of any lower-level newforms) - this is clearly a finite computation, which we
make some remarks on in Section \ref{sec:ComputingMinimalTwists}. Once we have
$g$ we can determine the level and $p$-th Fourier coefficient of any twist
$g\ox\mu$ of it via a prime-by-prime analysis, either working classically
(as in Section 3 of \cite{AtkinLi1978} and in \cite{Asai1976}) or adelically
(where it's clear what happens if the local component of the representation is
principal series or special, but more complicated if it's supercuspidal; see
the discussion in Sections 2 and 4 of \cite{LoefflerWeinstein2012} and Section
2 of \cite{Humphries2015}). The results of this analysis are summarized in the
following lemma.

\begin{lem} \label{lem:LevelQExpansionTwists}
	Let $g = \s b_n q^n$ be a twist-minimal newform of level $N_g$ and character
	$\ch_g$, and let $N_{g,\ch}$ be the conductor of $\ch_g$. Fix a prime $p$
	and let $p^{r_g}$ be the exact power of $p$ dividing $N_g$, $p^{r_{g,\ch}}$
	the exact power dividing $N_{g,\ch}$, and $\nu$ a Dirichlet character of
	prime-power conductor $p^u$. 
	\begin{itemize}
		\item If we don't have $r_g = r_{g,\ch} > 0$, then $g\ox\nu$ has level
			$\lcm(N_g,p^{2u})$ and equals the naive twist $g_\nu$. 
		\item If $r_g = r_{g,\ch} > 0$ and $u \ne r_{g,\ch}$ then $g\ox\nu$ has
			level $\lcm(N_g,p^{u+r_{g,\ch}},p^{2u})$ and equals $g_\nu$.
		\item If $r_g = r_{g,\ch} > 0$ and $u = r_{g,\ch}$, but the $p$-part of
			the conductor of $\ch_g \nu$ is $p^{r'} > 1$, then $g\ox\nu$ has level
			$\lcm(N_g,p^{u+r'})$ and equals $g_\nu$.
		\item If $r_g = r_{g,\ch} > 0$, $u = r_{g,\ch}$, and $\ch_g \nu$ is
			unramified at $p$, then $g\ox\nu$ has level $N_g$ and does not equal
			the naive twist $g_\nu$; instead it has a coefficient of
			$(\ch_g\nu)(p) \Lb_p$ for $q^p$ and thus can be explicitly written as 
			\[ (g\ox\nu) = \s_{(n,p) = 1} \nu(n) b_n q^n 
					+ \s_{n = p^i n'} (\ch_g\nu)(p)^i \nu(n') \Lb_p^i b_{n'} q^n. \]
	\end{itemize}
\end{lem}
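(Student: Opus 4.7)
The plan is to reduce the lemma to a purely local question at $p$ by passing to the automorphic representation $\pi = \ox_v \pi_v$ attached to the newform $g$. Twisting $g$ by a Dirichlet character $\nu$ of $p$-power conductor corresponds to $\pi\ox\nu$, whose local component at $p$ is $\pi_p\ox\nu_p$ and which agrees with $\pi$ at every other place. Twist-minimality of $g$ forces $\pi_p$ to be twist-minimal as a representation of $\mathrm{GL}_2(\Q_p)$, so by the classification of irreducible admissible representations $\pi_p$ falls into four families: an unramified principal series (giving $r_g = r_{g,\ch} = 0$); an unramified twist of the Steinberg (giving $r_g = 1$, $r_{g,\ch} = 0$); a ``generic'' supercuspidal with $r_g > r_{g,\ch}$; or a dihedral supercuspidal induced from a character of a ramified quadratic extension of $\Q_p$, which is precisely the case $r_g = r_{g,\ch} > 0$.

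First I would match the lemma's case distinction to this trichotomy. In the first three families the condition $r_g = r_{g,\ch} > 0$ fails, placing us in the first bullet; standard conductor formulas for $\mathrm{GL}_2$-twists (see e.g.\ Sections~2 and~4 of \cite{LoefflerWeinstein2012}, or the classical analysis in Section~3 of \cite{AtkinLi1978}) then give that twisting by $\nu_p$ of conductor $p^u \ge 1$ produces a representation of conductor $p^{\max(r_g, 2u)}$, matching the level $\lcm(N_g, p^{2u})$. In these cases the new vector of $\pi_p\ox\nu_p$ has vanishing $U_p$-eigenvalue, so $g\ox\nu$ equals the naive twist $g_\nu$ (whose $p$-th coefficient is already zero because $\nu(p) = 0$ when $\nu$ is ramified at $p$). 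In the dihedral case with $u \ne r_{g,\ch}$, or with $u = r_{g,\ch}$ but $\ch_g\nu$ still ramified at $p$, analogous local computations yield conductor exponents $u + \max(u, r_{g,\ch})$ and $u + r'$ respectively, giving the second and third bullets; in both subcases the $U_p$-eigenvalue still vanishes and $g\ox\nu = g_\nu$.

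The main obstacle is the fourth bullet, where $u = r_{g,\ch}$ and $\ch_g\nu$ is unramified at $p$. Here the cancellation between $\nu_p$ and the central character of $\pi_p$ is total: $\pi_p\ox\nu_p$ is no longer supercuspidal but rather an unramified twist of the Steinberg with conductor exponent $r_g$, so the level stays at $N_g$ and there is now a nonzero $U_p$-eigenvalue. To identify this eigenvalue I would use the explicit construction of dihedral supercuspidals by compact induction from the ramified quadratic extension of $\Q_p$ (as in Section~2 of \cite{Humphries2015} and the references above) and directly compute the $U_p$-action on the new vector of the twisted representation. The scalar $\Lb_p$ appears naturally as the Atkin--Lehner pseudo-eigenvalue of $g$ at $p$, and the explicit $q$-expansion in the lemma then follows from multiplicativity of Hecke eigenvalues ($a_{mn} = a_m a_n$ for $(m, n) = 1$) combined with the Steinberg-type recursion $a_{p^i} = a_p^i$ at $p$-power indices.
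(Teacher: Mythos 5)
Your reduction to a local problem at $p$ and the appeal to conductor formulas for $\mathrm{GL}_2$-twists is the right framework, and it matches the paper's approach in spirit. However, there is a genuine error in the classification step that breaks the proof of the last three bullets.

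You list the twist-minimal local representations as (i) unramified principal series, (ii) unramified twist of Steinberg, (iii) ``generic'' supercuspidal with $r_g > r_{g,\ch}$, and (iv) dihedral supercuspidal from a ramified quadratic extension, asserting that (iv) is precisely the case $r_g = r_{g,\ch} > 0$. This omits the \emph{ramified principal series} $\pi(\ch_1,\ch_2)$ in which exactly one of the inducing characters, say $\ch_2$, is ramified and the other unramified. That case is twist-minimal (twisting away the ramification in $\ch_2$ introduces it in $\ch_1$), its conductor is $\mathrm{cond}(\ch_2) = p^m$, and its central character $\ch_1\ch_2$ also has conductor $p^m$ since $\ch_1$ is unramified; so this is \emph{exactly} the case $r_g = r_{g,\ch} > 0$. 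By contrast, a twist-minimal dihedral supercuspidal always has central-character conductor strictly less than the conductor of the representation, so it lands in the first bullet, not the last three.

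Consequently the heart of your argument for the fourth bullet is wrong in a way that cannot be repaired: you claim that when $u = r_{g,\ch}$ and $\ch_g\nu$ is unramified, ``$\pi_p\ox\nu_p$ is no longer supercuspidal but rather an unramified twist of the Steinberg.'' Supercuspidality is preserved under twisting by any character, so a supercuspidal $\pi_p$ can never become Steinberg. The correct mechanism is that $\pi_p = \pi(\ch_1,\ch_2)$ with $\ch_1$ unramified and $\ch_2$ ramified; twisting by $\nu_p$ gives $\pi(\ch_1\nu_p, \ch_2\nu_p)$, and when $\ch_2\nu_p$ is unramified the twisted representation is again a half-ramified principal series (with the roles of the two characters exchanged) of the same conductor $p^{r_g}$. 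The newvector now has a nonzero $U_p$-eigenvalue coming from the unramified character $\ch_2\nu_p$ evaluated at $p$, and unwinding the relation between $\ch_1,\ch_2$, the central character $\ch_g$, and the original coefficient $b_p$ gives the value $(\ch_g\nu)(p)\Lb_p$ as in the statement. Your argument for the second and third bullets should likewise be run through the ramified principal series conductor computation (conductor of $\pi(\ch_1\nu_p,\ch_2\nu_p)$ is $\mathrm{cond}(\ch_1\nu_p)\cdot\mathrm{cond}(\ch_2\nu_p)$) rather than a supercuspidal analysis.
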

\begin{proof}
	The first case corresponds to the local representation of $g$ at $p$ either
	being unramified, special of level $p$, or supercuspidal. In all three cases
	it's clear that the twisted local representation will result in $g\ox\nu$
	having a trivial $p$-th Fourier coefficient so $g\ox\nu = g_\nu$. In the
	first two cases one can explicitly compute the conductor of the twisted
	local representation to be $p^{2u}$, and for the supercuspidal case we know
	that the conductor will be bounded above by $\max(p^{2u},p^{r_g})$ with
	equality if $2u > r_g$ via Section 3 of \cite{AtkinLi1978}, and equality if 
	$2u \le r_g$ by our assumption of twist-minimality.
	\par
	The remaining type of twist-minimal local representations are principal
	series $\pi(\ch_1,\ch_2)$ where one of the two characters $\ch_i$ is
	ramified; the final three possibilities cover subcases of this situation. In
	any case we know $g\ox\nu$ has local representation
	$\pi(\ch_1\nu_p,\ch_2\nu_p)$ where $\nu_p$ is the local character associated
	to the adelic lift of $\nu$. Here it is clear how to analyze the conductor
	of this principal series representation (since $\ch_1$ is unramified and
	$\ch_1\ch_2$ is the $p$-part of the adelic lift of $\ch_g$, the conductor of
	$\ch_1\nu_p$ is $p^u$ and the conductor of $\ch_2\nu_p$ equals the conductor
	of $\ch_g\nu$). In the case where $\ch_2\nu_p$ is unramified, its value at
	$p$ will give rise to the coefficient of $q^p$ in $g\ox\nu$ which is killed
	off in the naive twist $g_\nu$, and using the relations between the
	characters lets us compute this coefficient to be $(\ch_g\nu)(p) \Lb_p$.
\end{proof}

With this analysis it's easy to come up with a list of twists $g\ox\mu$ of
level at most $Nh$ and moreover find the exact level of each $g\ox\mu$ so
we can determine exactly which oldforms $(g\ox\mu)(mz)$ are of level $Nh$ as
well. This gives us a finite list $g_1,\ld,g_M$ of modular forms of which we
know $f|[\al_h]_k$ is a linear combination of, and we can proceed with a
computation similar to the one of the previous section: we sample at some
collection of more than $M$ points, compute the values of $g_l$ and
$f|[\al_h]_k$ at each point, and use least-squares approximation to find the
best fit for the list of coefficients in the relation $f|[\al_h]_k = \s c_l
g_l$. 
\par
Once again it seems very difficult to establish any sort of rigorous bounds on
the error in this computation, but in practice it works quite well and
heuristically one expects that the error in the computation will be near the
same order of magnitude as where we truncated our sums. More specifically, if w
e normalize all of our values $f|[\al_h]_k(z_j)$ and $g_l(z_j)$ by dividing by
$q_j = \exp(2\pi i z_j)$ and then numerically compute our values
$f|[\al_h]_k(z_j)/q_j$ and $g_l(z_j)/q_j$ to within an error of $10^{-E}$, then
we expect the numerical values of $c_l$ will be such that the product $c_l \dt
(g_l(z_j)/q_j)$ is accurate to about $10^{-E}$ as well. For the $g_l$'s that
are actually newforms, the coefficient of $q$ is $1$ so $g_l(z_j)/q_j \ap 1$,
and thus these $c_l$'s themselves should be accurate to about $10^{-E}$. For
$g_l$'s of the form $(g_0\ox\mu)(mz)$ for $m > 1$, the value of $g_l(z_j)/q_j$
is significantly smaller (approximately $\exp(-2\pi(m-1)\Im(z_j))$) so the
error in $c_l$ might be larger, but we can compensate for this by making our
original computation more accurate (as described in the algorithm below). 
\par
The last thing to decide is what points $z_j$ we want to sample. In this case
we have quite a bit of flexibility, and we are free to pick points $z_j$ to try
to minimize the number of terms needed to be used when computing the values of
our modular forms from the $q$-expansion of $f$ and its twists. Roughly
speaking this amounts to trying to simultaneously minimize both $|\exp(2\pi i
z)|$ and $|\exp(2\pi i (\al_h\dt z))|$, i.e. to simultaneously maximize
$\Im(z)$ and $\Im\pf{ahz+b}{chz+d} = \f{h\Im(z)}{|chz+d|^2}$. Comparing these
we can compute that the best choice for $z$ has $\Im(z) = \rt{h}/\rt{2}c$ and
$\Re(z) = -d/c$; expanding this a bit since we need multiple points we can
calculate that if we choose $z$ in the rectangle 
\[ \Im(z) \in \b[ \f{1}{2c\rt{h}} , \f{1}{c\rt{h}} \e] 
		\qq\qq \Re(z) \in \b[ \f{-d-\rt{h/2}}{ch},\f{-d+\rt{h/2}}{ch} \e] \]
then $|\exp(2\pi i z)|$ and $|\exp(2\pi i \al_h z)|$ are both bounded above by
$\exp(-\pi/c\rt{h})$. 

\begin{algo}[Least-squares for twists of an eigenform]
\label{algo:LeastSquaresTwistsOfEigenform}
	Suppose we have $f = \s a_n q^n \in M_k(N,\ch)$ an eigenform for all
	prime-to-$N$ Hecke operators, and we want to compute its expansion
	$f|[\al_h] = \s b_n q^n$ at a cusp given by a matrix $\al_h$ in our notation
	above. Suppose further that we've fixed constants $E_0$, $K$, and $C$ such
	that for $n\le K$ we would like to compute the coefficient $b_n$ to with
	an error of approximately $10^{-E_0} e^{nC}$. We proceed as follows:
	\begin{itemize}
		\item Determine the newform $f_0$ associated to $f$ and a twist-minimal
			newform $g_0$ that's a twist of $f_0$.
		\item For Dirichlet characters $\mu$ of modulus $N$, determine the level
			of the twist $g_0\ox\mu$; create a list $g_1,\ld,g_L$ of all forms
			$(g_0\ox\mu)(mz)$ that have level $Nh$. 
		\item Pick $M$ random points $z_1,\ld,z_M$ (we use $M = 2L$) with
			$1/2c\rt{h} \le \Im(z) \le 1/c\rt{h}$ and $(-d-\rt{h/2})/ch \le \Re(z)
			\le (-d+\rt{h/2})/ch$. 
		\item Set our truncation point for sums to be when the tail is size
			$10^{-E}$ where $E = E_0 + \f{m_0-1}{\log 10}( 2\pi\f{\rt{h}}{c} - C)$
			(or $E = E_0$, if $2\pi\f{\rt{h}}{c} < C$) where $m_0$ is the largest
			integer $\le K$ such that we have a modular form $(g_0\ox\mu)(m_0 z)$
			on our list.
		\item Numerically compute the values $f|[\al_h](z_j)$ using the
			$q$-expansion for $f$ to accuracy $10^{-E}$, and fill these into a
			vector $b$.
		\item Numerically compute the values $g_l(z_j)$ to an accuracy of
			$10^{-E}$, using the $q$-expansions for the twists as described in
			Lemma \ref{lem:LevelQExpansionTwists}, and fill these into a matrix
			$A$. 
		\item Numerically find the least squares solution to $Ax = b$, which
			approximates the values of $c_1,\ld,c_L$ in our linear combination.
			Use these values plus the $q$-expansions of the $g_l$ to provide a 
			numerical approximation for the $q$-expansion of $f|[\al_h] = \s c_l
			g_l$.
	\end{itemize}
\end{algo}

The change of the truncation point to $10^{-E}$ is to guarantee that we've
computed everything out far enough so that even the coefficient of the (small)
values of $(g_0\ox\mu)(m_0 z)$ can be computed with as much accuracy as we
want. In principle this could go quite far beyond the original accuracy
$10^{-E_0}$ we were interested in, and if this becomes an issue the choice of
points $z_j$ could be adjusted instead. However for most practical purposes the
change is not a serious problem, and the number of terms needed to be computed
in the sums usually stays far below the number needed for the algorithm in the
previous section. 
\par
For an example, we return to the modular form 
\[ f = q - 3q^2 + q^4 - 15q^5 - 25q^7 + 21q^8 + 45q^{10} + \cd 
		\in S_4(\Ga_0(27)) \]
considered in the previous section, and look at the expansion $f|[\al_3]_k$ at
the cusp $1/3$ (with the matrix $\al_1$ considered there). Now we know that
this translate must be a linear combination of twists lying in
$S_k(\Ga_1(81))$. One can check directly that $f$ is twist-minimal and the list
of possible basis elements are 
\[ f(z), f(3z), (f\ox\mu_1)(z), (f\ox\mu_1^2)(z), (f\ox\mu_1^3)(z),
	(f\ox\mu_1^3)(3z), (f\ox\mu_1^4)(z), (f\ox\mu_1^5)(z), \]
where we fix $\mu_1$ to be the Dirichlet character modulo $9$ defined on the
multiplicative generator $2$ of $(\Z/9\Z)^\x$ by $\mu_1(2) = \ze_6$. Then a 
numerical computation finds that $f|[\al_3]_k(z)$ is approximately 
\small
\begin{multline*}
	(.469846310392954 - .171010071662834i)(f\ox\mu_1)(z)
	+ (.469846310392954 + .171010071662834i)(f\ox\mu_1^2)(z) \\
	+ (.469846310392954 - .171010071662834i)(f\ox\mu_1^4)(z)
	+ (-.469846310392954 - .171010071662834i)(f\ox\mu_1^5)(z)
\end{multline*}
\normalsize
(omitting the factors where the numerically-calculated coefficients are very
close to zero). Numerically summing up this linear combination of Fourier
expansions, one gets a numerical $q$-series that (up to our expected error)
agrees with the one computed by our other algorithm in the previous section.
What's more interesting is to try to identify the complex numbers appearing as
coefficients here: they all seem to be approximating $\f12$ times an 18th root
of unity, and suggest that 
\[ f|[\al_3]_k(z) = \f{\ze_{18}^{-1}}{2} (f\ox\mu_1)(z)
		+ \f{\ze_{18}}{2} (f\ox\mu_1^2)(z)
		+ \f{\ze_{18}^{-1}}{2} (f\ox\mu_1^4)(z)
		+ \f{-\ze_{18}}{2} (f\ox\mu_1^5)(z). \]
Combining these $q$-series one can work out explicitly that if $f(z) = \s a_n
q^n$ then $f|[\al_3]_k(z) = -\s \ze_{18}^{8n} a_n q(n)$. So for this example,
the expansion is (up to a scalar) an additive twist of the original
$q$-expansion of $f$. Other modular forms have other behavior; for instance if
we consider the newform and matrix
\[ f = q  - q^2 - 7q^3 - 7q^4 + 7q^6 + 6q^7 + 15q^8 + 22q^9 + \cd 
		\in S_4(\Ga_0(25))  \qq \al_1 = \M{cc}{ 1 & -1 \\ 5 & -4}, \]
the numerically-calculated expansion of $f|[\al_1]_k(z)$ includes nonzero
coefficients for all four twists of $f = \s a_n q^n$ by Dirichlet characters
modulo 5 and can be expressed as $f|[\al_1]_k = \s \xi(n) q^n$ where $\xi$ is
periodic modulo 5 and satisfies 
\small
\begin{align*}
	\xi(1) &\ap -.809016994374947 + 1.11351636441161i 
		\ap \f{\cos(6\pi/5)}{\cos(7\pi/10)} \ze_{20}^7 \\
	\xi(2) &\ap .309016994374947 - .100405707943114i 
		\ap \f{\cos(2\pi/5)}{\cos(\pi/10)} \ze_{20}^{19} \\
	\xi(3) &\ap .309016994374947 + .100405707943114i 
		\ap \f{\cos(2\pi/5)}{\cos(\pi/10)} \ze_{20} \\
	\xi(4) &\ap -.809016994374947 - 1.11351636441161i 
		\ap \f{\cos(6\pi/5)}{\cos(7\pi/10)} \ze_{20}^{13}
\end{align*}
\normalsize
So we have numerically identified the expansion of $f(z)$ at the cusp $1/5$ as
being a ``twist'' of $f$ by a periodic function $\xi$ with coefficients that
are are algebraic numbers in $\Q(\ze_{20})$. We do not pursue a theoretical
understanding of how or why these specific coefficients arise; for our purposes
we just need the numerical values. 


\subsection{When can the eigenspace be narrowed down?} 
\label{sec:NarrowingDownEigenspace}

In this section we refine Lemma \ref{lem:WidthOfCuspForModularForm}, to narrow
down the space in which we can be guaranteed $f|[\al_h]_k$ lives (and
accordingly prune the list of potential twists considered in Algorithm
\ref{algo:LeastSquaresTwistsOfEigenform}). For some cusps the result is close
to optimal already, but for others it fails quite badly - most notably the cusp
$0$ (which always has width $N$), with the matrix 
\[ \al_1 = \M{cc}{ 0 & -1 \\ 1 & 0 }. \]
If $f$ is a newform, Lemma \ref{lem:WidthOfCuspForModularForm} and Theorem
\ref{thm:LinearCombinationOfTwists} can only tell us that $f|[\al_N]_k$ is a
linear combination of twists of $f$ (and their images under degeneracy maps)
which are modular of level $N^2$ for some character. But $|[\al_N]_k$ is just
the Atkin-Lehner operator $W_N$, and the theory of newforms tells us that
$f|[\al_N]_k$ is a scalar multiple of one particular newform $f_\rh$ of level
$N$. So in this case our result is quite far from sharp, and running Algorithm
\ref{algo:LeastSquaresTwistsOfEigenform} naively may take quite some time due
to including a great many unneeded basis elements.
\par
However, the situation is not quite as simple as it seems - while it is true
that for the particular matrix $\al_1$ above that $f|[\al_N]_k$ is always a
scalar multiple of a single newform, this will fail for other choices of
matrices taking $\oo$ to the cusp $0$ (even other ones with $a = 0$ and $c =
1$). The key point is actually that the lower-right entry $d$ is zero; this
makes the behavior of the lower-left and lower-right entries of the conjugate
$\al_h \ga \al_h\1$ sensible and allows us to conclude $f|[\al_h]_k$ transforms
reasonably under $\ga$. So a refinement of Lemma
\ref{lem:WidthOfCuspForModularForm} can only reasonably hold if we are careful
to choose our matrix $\al_1$ carefully. In the case of a general cusp $a/c$,
we'd like the product $cd$ to be as close to divisible by $N$ as possible -
since $c$ and $d$ must be coprime, in particular $d$ should be divisible by the
prime-to-$c$ part of $N$.
\par
So, fix a modular form $f \in M_k(N,\ch)$, and a cusp $a/c$ with associated
width $h$ for $f$ (so $h$ is determined from $N$, $c$, and $\ch$ as in Lemma
\ref{lem:WidthOfCuspForModularForm}). Factor $N$ as $c_0\dt d_0$, where $c_0$
and $c$ have the same prime divisors and $d_0$ is coprime to $c_0$. Then choose
a matrix 
\[ \al_1 = \M{cc}{ a & b \\ c & d } \in \SL_2(\Z) \]
with $a$ and $c$ as in our cusp ($\al_1$ takes $\oo$ to $a/c$) and with $d$
divisible by $d_0$ (which we can do because $d_0$ is coprime to $c$). Factor
the width $h$ as $h_c\dt h_d$ where $h_c | c_0$ and $h_d | d_0$, and also let
$\ch_c$ and $\ch_d$ denote the restrictions of $\ch$ to $\Z/c_0\Z$ and
$\Z/d_0\Z$, respectively. 

\begin{prop}
	In the above setup, for any matrix 
	\[ \ga = \M{cc}{ A & B \\ C & D } \in \Ga_0(Nh_c) \]
	satisfying $(A-D)c \ee 0 \md{c_0}$ we have $(f|[\al_h]_k)|[\ga]_k =
	(\ch_c\ch_d\1)(D) f|[\al_h]_k$. 
\end{prop}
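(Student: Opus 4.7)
The plan is to mimic the proof of Lemma \ref{lem:WidthOfCuspForModularForm}: compute the matrix $\beta := \alpha_h \gamma \alpha_h^{-1}$, verify that $\beta \in \Gamma_0(N)$, and then identify the character picked up, which will be $\chi(\text{lower-right entry of }\beta)$. The extra care needed here is that we want a \emph{refined} analysis split between the $c_0$-part and the $d_0$-part of $N$ via CRT, exploiting our deliberate choice $d_0 \mid d$.

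First I would write out the four entries of $\beta = \frac{1}{h}\alpha_h \gamma \begin{pmatrix} d & -b \\ -ch & ah \end{pmatrix}$ explicitly. A short check using $Nh_c \mid C$ and $d_0 \mid d$ (noting that the width conditions on $h$ force $h_d = d_0$, since $\gcd(c^2 h_c, d_0) = 1$) shows all entries are integers. The lower-left entry $cd(A-D) + d^2 C/h - c^2 h B$ is then seen to vanish modulo $N$ by splitting into the $c_0$-part (where $(A-D)c \equiv 0 \pmod{c_0}$ kills the first term, $C/h \equiv 0 \pmod{c_0}$ kills the second, and $c_0 \mid c^2 h_c$ kills the third) and the $d_0$-part (where $d_0 \mid d$ and $d_0 = h_d \mid h$ kill everything). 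So $\beta \in \Gamma_0(N)$.

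The heart of the argument is the lower-right entry $D' = -bcA - bdC/h + achB + adD$, analyzed separately mod $c_0$ and mod $d_0$. Modulo $c_0$, the $-bdC/h$ term vanishes; combining $-bcA + adD$ with $ad = 1+bc$ gives $D - bc(A-D) \equiv D \pmod{c_0}$ by hypothesis; so $D' \equiv D + achB \pmod{c_0}$. Then $D'/D \equiv 1 + D^{-1}achB$ lies in $1 + ch\mathbb{Z}/c_0\mathbb{Z}$, which is inside the kernel of $\chi_c$ by the width condition on $h$, so $\chi_c(D') = \chi_c(D)$. Modulo $d_0$, using $d_0 \mid d$ and $d_0 = h_d \mid h$, every term except $-bcA$ dies, so $D' \equiv -bcA \pmod{d_0}$; but $ad - bc = 1$ with $d_0 \mid d$ gives $-bc \equiv 1 \pmod{d_0}$, so $D' \equiv A \pmod{d_0}$. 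Since $AD \equiv 1 \pmod{Nh_c}$ (from $\det\gamma = 1$ and $Nh_c \mid C$), we get $\chi_d(D') = \chi_d(A) = \chi_d(D)^{-1}$.

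Combining via CRT gives $\chi(D') = \chi_c(D)\chi_d(D)^{-1}$, and then
\[ (f|[\alpha_h]_k)|[\gamma]_k = f|[\beta\alpha_h]_k = \chi(D') \cdot f|[\alpha_h]_k, \]
which is the claim. The main obstacle, or at least the part demanding the most care, is the mod-$c_0$ analysis of $D'$: one has to recognize that the leftover term $achB$ is precisely of the shape handled by the width hypothesis that $\chi$ vanishes on $1 + ch\mathbb{Z}/N\mathbb{Z}$, and this is what makes the proposition work at all for cusps of non-squarefree level. Everything else is routine manipulation of the divisibility relations between $c, d, h_c, h_d, c_0, d_0$, the crucial combinatorial fact being $h_d = d_0$ forced by the width condition.
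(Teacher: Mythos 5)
Your proof is correct and follows essentially the same route as the paper's: compute $\beta = \al_h\ga\al_h^{-1}$ entrywise, show $\beta\in\Ga_0(N)$ by splitting the lower-left entry across the coprime factorization $N = c_0 d_0$, then analyze the lower-right entry of $\beta$ modulo $c_0$ (using the width hypothesis that $\ch_c$ annihilates $1+ch\Z$) and modulo $d_0$ (using $d_0\mid d$ and $bc\equiv -1\pmod{d_0}$ to get $\ch_d(D')=\ch_d(A)=\ch_d^{-1}(D)$). The one cosmetic difference is that in the mod-$d_0$ calculation you reduce $D'$ directly to $-bcA\equiv A$, whereas the paper first rewrites $adD - bcA = D + (D-A)bc$ and then substitutes $bc\equiv -1$; these are the same computation. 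Your explicit observation that the width condition $N\mid c^2h$ together with $\gcd(c^2h_c,d_0)=1$ forces $h_d = d_0$ is a useful step that the paper leaves implicit in the phrase ``by construction of $d$''.
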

\begin{proof}
	We have $f|[\al_h]_k|[\ga]_k = f|[\al_h\ga\al_h\1]_k|[\al_h]_k$, so we 
	want to show that $f$ transforms under $\al_h\ga\al_h\1$ by the scalar
	$(\ch_c\ch_d\1)(D)$. An explicit computation gives  
	\[ \al_h\ga\al_h\1 
			= \M{cc}{ * & * \\ 
						-Bc^2 h + (A-D)cd + Cd^2/h & Bach + Dad - Abc - Cbd/h }. \]
	By construction of $d$ and assumption that $C \ee 0\md{Nh_c}$ and $(A-D)c
	\ee 0 \md{c_0}$ we can conclude that each of the three terms in the
	lower-left entry are divisible by $N$, and thus $\al\ga\al_h\1 \in
	\Ga_0(N)$. Thus $f|[\al_h\ga\al_h\1]$ equals $\ch(Bach + Dad - Abc - Cbd/h)
	\dt f$. 
	\par
	So we just need to simplify 
	\[ \ch(Bach + Dad - Abc - Cbd/h) = \ch(D + (D-A)bc + Bach), \]
	where we can remove $Cbd/h \ee 0\md N$ immediately. To further work with 
	this we split $\ch$ as our product $\ch_c\ch_d$. Since $\ch_c$ is defined
	modulo $c_0$ and $(D-A)bc \ee 0\md{c_0}$ we have 
	\[ \ch_c(D + (D-A)bc + Bach) = \ch_c(D + Bach) = \ch_c(D) \ch_c(1 + D'Bach)
		= \ch_c(D) \]
	with the last equality because $h$ is defined so that $\ch$ (and thus
	$\ch_c)$ is trivial on $1 + ch\Z$. Similarly since $\ch_d$ is defined modulo
	$d_0$ and we have  
	\[ \ch_d(D + (D-A)bc + Bach) = \ch_d(D + (D-A)(-1)) 
			= \ch_d(A) = \ch_d\1(D), \]
	using that working modulo $d_0$ we have $ch \ee 0$, $bc \ee -(ad-bc) =
	-1$, and $AD \ee AD-BC = 1$. 
\end{proof}

So by properly choosing the lower-right entry $d$ in our matrix $\al_1$,
we can guarantee that $f|[\al_h]_k$ is actually modular of level $Nh_c$ rather
than just $Nh$, and moreover get at least some control of the character. In the
case that $c$ and $N/c$ are coprime (i.e. $c = c_0$ in our notation above),
this proposition fully determines the character and states that $f|[\al_h]_k$
lies in $M_k(Nh_c,\ch_c\ch_d\1)$, but in the general case where there are
primes dividing both $c$ and $N/c$ we can only give a transformation rule for
matrices $\ga$ such that $A \ee D \md{c_0/c}$, i.e. for some intermediate
congruence group $\Ga_H(Nh_c)$. This allows $f|[\al_h]_k$ to be a linear
combination of forms with characters that agree with $\ch_c\ch_d\1$ on $H$ -
and based on numerical examples in such cases this seems to be the best one
could hope for. 
\par
We can restate the result of our computation as follows, which we can view as a
strengthened version of Lemma \ref{lem:WidthOfCuspForModularForm} but which
only applies if the matrix $\al_1$ has a ``correctly-chosen'' lower-right
entry.

\begin{prop} \label{prop:RefinementOfWhereModularFormLands}
	Fix a modular form $f\in M_k(N,\ch)$ and a cusp $a/c$ with width $h$ (for
	$f$). Choose a matrix $\al_1 \in \SL_2(\Z)$ taking $\oo$ to $a/c$ with
	bottom-right entry $d$ divisible by the prime-to-$c$ part of $N$. Then we 
	have 
	\[ f|[\al_h]_k \in \Op_{\ch'} M_k(Nh_c,\ch') \]
	where $\ch'$ runs over all characters of $(\Z/N\Z)^\x$ which agree with
	$\ch_c \ch_d\1$ on the subgroup $H \le (\Z/N\Z)^\x$ which is the kernel of 
	the map $(\Z/N\Z)^\x \to (\Z/\f{c_0}{c}\Z)^\x$ given by $a+N\Z \mt a^2 +
	\f{c_0}{c}\Z$. 
\end{prop}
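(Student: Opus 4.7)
The plan is to translate the awkward congruence condition from the preceding proposition into the description of an intermediate congruence subgroup $\Ga_H(Nh_c)$, and then to apply a standard character-orthogonality decomposition to recover the claimed direct sum over $\ch'$.

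The first step would be to check that for $\ga = \M{cc}{A & B \\ C & D} \in \Ga_0(Nh_c)$, the condition $(A-D)c \ee 0 \md{c_0}$ is equivalent to $D \bmod N$ lying in the subgroup $H$. Since $C \ee 0 \md{Nh_c}$, the identity $AD - BC = 1$ forces $AD \ee 1 \md{c_0/c}$; and since $\gcd(c,c_0) = c$, the congruence $(A-D)c \ee 0 \md{c_0}$ is equivalent to $A \ee D \md{c_0/c}$. Combining the two yields $D^2 \ee 1 \md{c_0/c}$, which is exactly the kernel condition defining $H$.

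Next I would introduce the intermediate group $\Ga_H(Nh_c) := \{\ga \in \Ga_0(Nh_c) : D \bmod N \in H\}$, which sits between $\Ga_1(Nh_c)$ and $\Ga_0(Nh_c)$. By the identification just made, the preceding proposition reads exactly as the statement that $f|[\al_h]_k \in M_k(\Ga_H(Nh_c), \psi)$ for $\psi := (\ch_c \ch_d\1)|_H$. To extract the direct-sum decomposition, I would apply the usual character-orthogonality projector
\[ g \mt \f{1}{[(\Z/N\Z)^\x : H]} \s_{\eta \in (\Z/N\Z)^\x/H} \L{\ch'(\eta)} \, g|[\tilde\eta]_k, \]
where $\tilde\eta$ is any lift of $\eta$ to a coset representative of $\Ga_H(Nh_c)$ in $\Ga_0(Nh_c)$; orthogonality then yields
\[ M_k(\Ga_H(Nh_c), \psi) = \Op_{\ch'|_H = \psi} M_k(Nh_c, \ch'). \]

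The one point requiring care, which I would flag as the principal (though still minor) obstacle, is verifying that the indexing set really can be taken to be characters $\ch'$ of $(\Z/N\Z)^\x$ rather than of $(\Z/Nh_c\Z)^\x$. The kernel of the projection $(\Z/Nh_c\Z)^\x \to (\Z/N\Z)^\x$ is entirely contained in the preimage of $H$ (since $1 \in H$), so any character of $(\Z/Nh_c\Z)^\x$ whose restriction equals the pullback of $\psi$ must be trivial on that kernel and hence descends to $(\Z/N\Z)^\x$; this matches the indexing in the statement of the proposition exactly.
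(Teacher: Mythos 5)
Your proposal is correct and follows exactly the route the paper intends: the paper presents this proposition as a direct "restatement" of the immediately preceding transformation rule without spelling out the translation, and you have filled in precisely those details — identifying the congruence condition $(A-D)c\ee 0\md{c_0}$ with $D\bmod N\in H$ (via $AD\ee 1\md{c_0/c}$ from the determinant and the level condition), forming the intermediate group $\Ga_H(Nh_c)$, applying the standard character-orthogonality decomposition, and checking that the extending characters descend to $(\Z/N\Z)^\x$.
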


Suppose $p$ is a prime with exact power $p^m$ dividing $N$, and let $\ch_p$
denote the $p$-component of $\ch$ (a character of $(\Z/p^m\Z)^\x$). If $p\nd c$
(i.e. $p|d$) the restriction on $\ch'$ above requires that $\ch'_p = \ch_p\1$.
For $p|c$ we need to consider the exact power $p^{m'}$ dividing $c_0/c$, and
the restriction is that $\ch'_p$ agrees with $\ch_p$ on the multiplicative
subgroup $1+p^{m'}\Z$ (and that $\ch'_p$ and $\ch_p$ have the same sign, but
this is determined by the parity of $k$ anyway). 
\par
To apply this in Algorithm \ref{algo:LeastSquaresTwistsOfEigenform}, we need to
consider which twists $(f\ox\mu)$ will lie in the space considered above. Since
the character of the twist is $\ch\mu^2$ we see that for $p|d$ we need
$\ch_p\mu_p^2 = \ch_p\1$, i.e. that $\mu_p = \ch_p\1$ up to a quadratic
character. For $p|c$ we need that $\ch_p\mu_p^2 = \ch_p$ on $1+p^{m'}\Z$, i.e.
that $\mu_p^2$ is trivial modulo $p^{m'}$. In the case when $p$ is odd, if $m'
= 0$ this requires $\mu_p$ to be either trivial or the unique quadratic
character, while if $m' \ge 1$ then $\mu_p$ must have conductor at most
$p^{m'}$. For $p = 2$, if $m' = 0,1$ then $\mu_p$ may be trivial or any of the
four quadratic characters, while if $m' \ge 2$ then $\mu_p$ may be any
character of conductor at most $2^{m'+1}$.
\par
So Proposition \ref{prop:RefinementOfWhereModularFormLands} represents a
significant restriction of potential twists appearing in Algorithm
\ref{algo:LeastSquaresTwistsOfEigenform} compared to our original result from
Lemma \ref{lem:WidthOfCuspForModularForm}. However, it still does not recover
the full strength of what newform theory tells us for the cusp $0$ (where we're
applying the Atkin-Lehner involution $W_N$) or the full strength of Asai's
result \cite{Asai1976} covering the case when $N$ is squarefree, because our
result cannot see distinguish differences by quadratic characters. However, we
remark that combining Proposition \ref{prop:RefinementOfWhereModularFormLands}
with the restriction to level $Nh_c$ often rules out incorrect twists; for
instance in the squarefree case we know $f|[\al_h]_k$ will still have $N$ and
this rules out most incorrect twists because they would have a higher level.
\par
We suspect that one could analyze how Hecke operators interact with
our slash operators $|[\al_h]_k$ (similarly to what is done in Chapter 4.6 of
\cite{MiyakeModForms}, or in \cite{Asai1976}) and further narrow down the list
of twists needed to be considered in Algorithm
\ref{algo:LeastSquaresTwistsOfEigenform}; we do not attempt to carry this out
here.
 
\section{Theoretical results on transferring modular forms to other cusps}
\subsection{Transformations of eigenforms to other cusps}
\label{sec:TransformEigenformToOtherCusp}

In this section we prove Theorem \ref{thm:LinearCombinationOfTwists}, that if
$f \in S_k(N,\ch)$ is an eigenform of all Hecke operators $p\nd N$, then 
the translate to another cusp $f|[\al_h]_k \in S_k(\Ga_1(Nh))$ arises as a
linear combination of twists of $f$ (and their images under degeneracy maps).
To begin our analysis we split $[\al_h]_k$ into its two parts
\[ \begin{tikzcd}
	S_k(N,\ch) \a{rr}{[\al_1]_k} 
		&& S_k(\Ga_1(N,h)) \a{rr}{[\ta_h]_k} 
		&& S_k(\Ga_1(Nh))
\end{tikzcd}. \]
To study this we recall the general definition of Hecke operators on these
spaces. We can consider congruence subgroups of the form
\[ \Ga_H(N,n) = \b\{ \M{cc}{ a & b \\ c & d } 
		: c \ee 0\md N,\ b\ee 0\md n,\ a+n\Z,d+n\Z \in H \e\} \se \SL_2(\Z) \]
for $n|N$ and $H$ a subgroup of $(\Z/n\Z)^\x$. For such a subgroup $\Ga =
\Ga_H(N,n)$ we set 
\[ \De = \De_H(N,n) = \b\{ \M{cc}{ a & b \\ c & d } 
	: c \ee 0\md N,\ b\ee 0\md n,\ a+n\Z \in H,\ ad-bc > 0 \e\} \se M_2(\Z). \]
For $m > 0$ we take the subset $\De_m = \{ \be\in\De : \det(\be) = m \}$. Then,
for $\ch$ a character of $H \le (\Z/N\Z)^\x$ that we view as a character of
$\De$ by acting on the upper-left entry $a$, we define the Hecke operator $T_m$
on $M_k(\Ga,\ch)$ by taking a decomposition of $\De_m$ in terms of left cosets
of $\Ga$: 
\[ \De_m = \dU_i \Ga \be_i, \qq\qq 
	T_m f = m^{k/2-1} \s_i \ch(\be_i) f|[\be_i]_k. \]
The theory of Hecke operators is worked out in this generality in Chapter 3 of
\cite{ShimuraAutForms}. In particular, Proposition 3.36 gives an explicit
formula for $T_m$ that lets us conclude that passing to a larger congruence
subgroup preserves Hecke operators prime to the level. 

\begin{prop}
	Suppose we have two subgroups of the above form satisfying $\Ga_{H'}(N',n')
	\le \Ga_H(N,n)$ (which implies $N | N'$, $n | n'$, and the pullback of $H$
	to $(\Z/N'\Z)^\x$ contains $H'$); for any character $\ch$ of $H$ (and its
	corresponding restriction $\ch'$ to $H'$) we have an inclusion
	\[ M_k(\Ga_H(N,n),\ch) \se M_k(\Ga_{H'}(N',n'),\ch'). \]
	If $m$ is an integer prime to $N'$, then the Hecke operators $T_m$ on these
	two spaces are compatible with the inclusion map. 
\end{prop}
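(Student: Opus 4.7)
The plan is to invoke Proposition 3.36 of \cite{ShimuraAutForms}, which provides an explicit formula for $T_m$ when $m$ is coprime to the level; the idea is that this formula is expressed in terms of upper-triangular coset representatives determined purely by $m$, with coefficients coming from $\ch$, so shrinking the ambient congruence subgroup (while keeping $m$ coprime to the new level) should leave the action essentially unchanged.

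First I would recall Shimura's formula: for $\Ga = \Ga_H(N,n)$ and $(m,N) = 1$,
\[ T_m f = m^{k/2-1} \s_{\substack{ad = m \\ a,d > 0}} \s_{b=0}^{d-1} \ch(a)\ f\9|_k \M{cc}{a & b \\ 0 & d}, \]
where for each positive divisor $a$ of $m$ we implicitly pick a lift to $H \le (\Z/n\Z)^\x$. Next I would write down the analogous formula for $T_m$ on $M_k(\Ga_{H'}(N',n'),\ch')$, which is available because $(m,N') = 1$. The summation runs over triples $(a,b,d)$ defined by the same conditions on $m$, the upper-triangular representative matrices are literally identical, and the character values agree since $\ch'$ is the restriction of $\ch$. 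Consequently the two formulas produce the same element of $M_k(\Ga_{H'}(N',n'),\ch')$ when applied to any $f \in M_k(\Ga_H(N,n),\ch)$, which is exactly the compatibility being asserted.

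The main technical obstacle is confirming that the two coset decompositions of $\De_m$ genuinely produce the same $\sigma_1(m)$ upper-triangular representatives, rather than a finer decomposition forced by passing to the smaller group. This is where I would need to be careful about the definitions of $\Ga_H(N,n)$ and $\Ga_{H'}(N',n')$: for a representative of the form $\M{cc}{a & b \\ 0 & d}$, the condition $b \ee 0 \md{n'}$ can be arranged by translating $b$ (since $(n',d) = 1$ as $(m,N') = 1$), the lower-left entry $0$ satisfies any divisibility condition, and the constraint on $a$ reflecting $H'$ can be met by selecting a compatible lift, which then automatically projects to an element of $H$ because the pullback of $H$ to $(\Z/n'\Z)^\x$ contains $H'$. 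Once these verifications are in place, both Hecke operators are written as the same sum of slash-actions, and the desired compatibility with the inclusion map is immediate.
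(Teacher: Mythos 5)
Your proposal takes essentially the same approach as the paper, which likewise offers no argument beyond pointing at Shimura's Proposition 3.36 and observing that the explicit formula it gives for $T_m$ depends only on the divisor-sum structure of $m$ and the character $\chi$, not on the ambient congruence subgroup, once $m$ is coprime to the level. Your elaboration of the coset-representative details is sound (and addresses the right potential pitfall, namely that refining the group might refine the coset decomposition), so this is a correct unpacking of the citation the paper leaves implicit.
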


With this setup it's easy to check that our map
$[\ta_h]_k$ is compatible with Hecke operators $T_m$ for $(m,N) = 1$. 

\begin{lem}
	Fix an integer $N$ and a divisor $h$ of it, and consider the map 
	\[ [\ta_h]_k : S_k(\Ga_1(N,h)) \to S_k(\Ga_1(Nh)). \]
	Then if $(m,N) = 1$ the Hecke operators $T_m$ on each space are compatible
	with $[\ta_h]_k$: we have $T_m(f|[\ta_h]_k) = (T_m f)|[\ta_h]_k$ for all
	$f$.
\end{lem}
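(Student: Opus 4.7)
The plan is to prove the Hecke compatibility by showing conjugation by $\ta_h$ induces a bijection between the coset representatives defining $T_m$ on the two groups, which then lets both sides be computed in parallel. First, a direct matrix computation verifies that $\ta_h$ conjugates $\Ga_1(Nh)$ onto $\Ga_1(N,h)$: for $\ga = \M{cc}{A & B \\ C & D} \in \Ga_1(Nh)$,
\[ \ta_h \ga \ta_h\1 = \M{cc}{A & hB \\ C/h & D} \in \Ga_1(N,h), \]
using $Nh \mid C$ so that $C/h$ is an integer divisible by $N$, together with $A, D \ee 1 \md{Nh}$. The Hecke double coset $\De_m = \Ga_1(N,h) \M{cc}{1 & 0 \\ 0 & m} \Ga_1(N,h)$ defining $T_m$ on $\Ga_1(N,h)$ is therefore the $\ta_h$-conjugate of $\De'_m = \Ga_1(Nh) \M{cc}{1 & 0 \\ 0 & m} \Ga_1(Nh)$ (since $\ta_h$ commutes with the diagonal matrix $\M{cc}{1 & 0 \\ 0 & m}$), so conjugation by $\ta_h$ induces a bijection $\Ga_1(Nh) \q \De'_m \to \Ga_1(N,h) \q \De_m$ of coset spaces.

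Given this bijection, fix coset representatives $\{\be'_j\}$ for $\Ga_1(Nh) \q \De'_m$ and set $\be_j := \ta_h \be'_j \ta_h\1$, which then represent $\Ga_1(N,h) \q \De_m$. The computation is
\[ T_m(f|[\ta_h]_k) = m^{k/2-1} \s_j f|[\ta_h \be'_j]_k = m^{k/2-1} \s_j f|[\be_j \ta_h]_k = (T_m f)|[\ta_h]_k, \]
where the middle equality rewrites $\ta_h \be'_j = \be_j \ta_h$, and the final equality expands the slash operator and uses that $\{\be_j\}$ is a valid set of representatives for computing $T_m f$.

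The main obstacle is making the coset bijection concrete: while $\ta_h \De'_m \ta_h\1 \se \De_m$ is immediate from the conjugation computation, the reverse inclusion (needed for surjectivity of the induced map on cosets) requires that every $\Ga_1(N,h)$-coset in $\De_m$ contains a representative $\be$ with $\ta_h\1 \be \ta_h$ integral and satisfying the $\Ga_1(Nh)$-compatible conditions. The hypothesis $(m,N) = 1$ is precisely what makes this possible, since it allows one to choose upper-triangular representatives $\M{cc}{a & b \\ 0 & d}$ with $ad = m$, $(a, N) = 1$, and with $b$ adjustable modulo $d$ within its $\Ga_1(N,h)$-coset so as to arrange $h \mid b$.
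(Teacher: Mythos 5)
Your conjugation computation is correct as far as it goes: for $\ga = \M{cc}{A & B \\ C & D} \in \Ga_1(Nh)$ one has $\ta_h\ga\ta_h\1 = \M{cc}{A & hB \\ C/h & D}$, and this lands in $\Ga_1(N,h)$. But the claim that this map is \emph{onto} $\Ga_1(N,h)$ is false, and this breaks the argument. The image consists of matrices with $A, D \ee 1 \md{Nh}$, whereas membership in $\Ga_1(N,h)$ only requires $A, D \ee 1 \md{h}$; so $\ta_h\Ga_1(Nh)\ta_h\1$ is a \emph{proper} subgroup of $\Ga_1(N,h)$. (Equivalently, $\ta_h\1\Ga_1(N,h)\ta_h$ is the intermediate group $\Ga_H(Nh)$, with $H$ strictly larger than $\{1\}$, sitting properly between $\Ga_1(Nh)$ and $\Ga_0(Nh)$.) Because the two $\Gamma$'s are not conjugate under $\ta_h$, conjugation does \emph{not} induce a well-defined bijection of coset spaces: if $\be' = \ga\be$ with $\ga \in \Ga_1(N,h) \smallsetminus \ta_h\Ga_1(Nh)\ta_h\1$, then $\ta_h\1\be\ta_h$ and $\ta_h\1\be'\ta_h$ fall in different $\Ga_1(Nh)$-cosets. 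Likewise $\ta_h\De'_m\ta_h\1$ is only a proper subset of $\De_m$, so the "reverse inclusion" you flag at the end is not merely an integrality nuisance to be fixed by adjusting $b$ — it genuinely fails at the level of sets. (Incidentally, the discussion of $b$ is moot: by the definition of $\De_1(N,h)$ in the paper, $b$ is already required to be divisible by $h$, so integrality of $\ta_h\1\be\ta_h$ is never the obstruction. Also note that for upper-triangular representatives, $b$ is only adjustable modulo $hd$, not modulo $d$, within a fixed $\Ga_1(N,h)$-coset.)

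The correct argument, which the paper uses, factors the problem through the genuine conjugate group. Conjugation by $\ta_h$ gives an honest isomorphism $S_k(\Ga_1(N,h)) \iso S_k(\Ga_H(Nh))$ that carries Hecke operators to Hecke operators — this half really is the clean "change of coordinates" argument you were attempting, because here the groups and the $\De$'s do match up exactly. One then composes with the inclusion $S_k(\Ga_H(Nh)) \hookrightarrow S_k(\Ga_1(Nh))$, which is compatible with $T_m$ for $m$ prime to the level by the Proposition proved immediately beforehand in the paper (a consequence of Shimura's Proposition~3.36). This second step is precisely what your proof is missing. The hypothesis $(m,N)=1$ enters through that Proposition, not through a choice of upper-triangular representatives.
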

\begin{proof}
	One can check that conjugation by $\ta_h$ takes $\Ga_1(N,h)$ to $\Ga_H(Nh)$
	for $H\le (\Z/hN\Z)^\x$ the kernel of the projection to $(\Z/N\Z)^\x$, and
	also $\ta_h\1 \De_1(N,h)_m \ta_h = \De_H(Nh)_m$. From this we can see that
	$[\ta_h]_k$ maps from $S_k(\Ga_1(N,h))$ to $S_k(\Ga_H(Nh))$ and preserves
	$T_m$, and we can include into $S_k(\Ga_1(Nh))$.
\end{proof}

On the other hand, the interaction of $[\al_1]_k$ with Hecke operators seems
less well-known. In trying to analyze this we run into the problem that
$f|[\al_1]_k$ is invariant under the subgroup $\al_1\1 \Ga_0(N) \al_1$ which is
hard to identify and may not be one of the types of subgroups we've already
studied. We can always find a congruence subgroup inside of it that is (what
we've proven is that $\Ga_1(N,h)$ is contained in $\al_1\1 \Ga_0(N) \al_1$),
but there isn't a direct link between the Hecke operators involved. However, we
can see that $[\al_1]_k$ is compatible with some of the Hecke operators as
follows. 

\begin{prop}
	For a matrix $\al_1 \in \SL_2(\Z)$ and the associated integer $h$ as above,
	the operator $[\al_1]_k : S_k(N,\ch) \to S_k(\Ga_1(N,h))$ is compatible with
	Hecke operators $T_m$ for $m \ee 1\md N$.
\end{prop}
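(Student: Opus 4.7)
The plan is to reduce both Hecke operators to sums over the same set of coset representatives, exploiting the hypothesis $m \equiv 1 \pmod N$ to work with matrices congruent to the identity modulo $N$. Set $\Delta(N)_m = \{\beta \in M_2(\Z) : \det \beta = m,\ \beta \equiv I \pmod N\}$; because $\Gamma(N)$ is normal in $\SL_2(\Z)$, this monoid is stabilized under conjugation by any element of $\SL_2(\Z)$, and in particular by $\alpha_1$. This conjugation-invariance is the ultimate reason the hypothesis $m \equiv 1 \pmod N$ is exactly what is needed.

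First I would establish that $\Delta(N)_m$ meets every coset of both $\Gamma_0(N) \backslash \Delta_0(N)_m$ and $\Gamma_1(N,h) \backslash \Delta_1(N,h)_m$. Given any $\beta \in \Delta_0(N)_m$, reduction mod $N$ makes $\beta$ upper triangular with $ad \equiv \det \beta \equiv 1 \pmod N$, so $a, d \in (\Z/N\Z)^\times$ and one can construct $\gamma \in \Gamma_0(N)$ with $\gamma \equiv \beta^{-1} \pmod N$, giving $\gamma\beta \equiv I \pmod N$. The same construction works when $\Gamma_0(N)$ is replaced by $\Gamma_1(N,h)$, because membership of $\beta$ in $\Delta_1(N,h)_m$ forces $a \equiv 1 \pmod h$ and (since $m \equiv 1 \pmod h$) also $d \equiv 1 \pmod h$, so the prescribed residues of $\gamma$ already satisfy the tighter congruences defining $\Gamma_1(N,h)$. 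Conversely, if $\beta, \beta' \in \Delta(N)_m$ differ by an element $\gamma$ of either $\Gamma_0(N)$ or $\Gamma_1(N,h)$, then $\gamma \equiv I \pmod N$, so $\gamma \in \Gamma(N)$. Hence a single set $\{\beta_i\}$ of representatives for $\Gamma(N) \backslash \Delta(N)_m$ represents both Hecke coset spaces simultaneously.

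With such representatives $\chi(\beta_i) = \chi(1) = 1$, so the character factors drop out and $T_m f = m^{k/2 - 1} \sum_i f|[\beta_i]_k$. Rewriting $\beta_i \alpha_1 = \alpha_1 \cdot (\alpha_1^{-1} \beta_i \alpha_1)$ we get $(T_m f)|[\alpha_1]_k = m^{k/2 - 1} \sum_i (f|[\alpha_1]_k)|[\alpha_1^{-1} \beta_i \alpha_1]_k$. Since $\alpha_1$ permutes $\Delta(N)_m$ by conjugation and normalizes $\Gamma(N)$, the set $\{\alpha_1^{-1} \beta_i \alpha_1\}$ is again a complete set of representatives for $\Gamma(N) \backslash \Delta(N)_m$, and hence for $\Gamma_1(N,h) \backslash \Delta_1(N,h)_m$. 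The right-hand side is therefore precisely $T_m(f|[\alpha_1]_k)$.

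The main obstacle is the first step: verifying that the natural map $\Gamma(N) \backslash \Delta(N)_m \to \Gamma_1(N,h) \backslash \Delta_1(N,h)_m$ is a bijection. Surjectivity here requires producing an element of $\Gamma_1(N,h)$ (not merely of $\Gamma_0(N)$) that conjugates a given $\beta \in \Delta_1(N,h)_m$ to be $\equiv I \pmod N$, which is where the careful book-keeping with $h \mid N$ is needed. Once that is done, the rest of the argument is formal, driven entirely by the normality of $\Gamma(N)$ in $\SL_2(\Z)$.
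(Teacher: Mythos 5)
Your proof is correct and rests on the same engine as the paper's: for $m\equiv 1\pmod N$, every element of $\Delta(N)_m$ is $\equiv I \pmod N$, so conjugation by $\alpha_1$ (which normalizes $\Gamma(N)$) permutes $\Delta(N)_m$ and hence permutes a left-coset decomposition. The one presentational difference is that the paper first descends to $S_k(\Gamma(N))$ via a commutative diagram of inclusions and invokes the already-stated proposition (from Shimura) that such inclusions are compatible with prime-to-level Hecke operators; you instead fold that compatibility in directly by verifying that a single set of representatives chosen in $\Delta(N)_m$ simultaneously represents the Hecke coset spaces for $\Gamma_0(N)$ and $\Gamma_1(N,h)$. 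That makes your argument slightly more self-contained, at the cost of the bookkeeping you flag in your last paragraph, but the logic is sound and the steps you check (surjectivity via lifting the mod-$N$ inverse, injectivity via $\gamma\equiv I\pmod N$, conjugation preserving $\Delta(N)_m$) are all correct.
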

\begin{proof}
	Consider the following diagram of spaces of modular forms: 
	\[ \begin{tikzcd}
		S_k(N,\ch) \a{rr}{[\al_1]_k} \a[in]{d}{} 
			&& S_k(\Ga_1(N,h)) \a[in]{d}{} \\
		S_k(\Ga(N)) \a[s]{rr}{[\al_1]_k} && S_k(\Ga(N)) 
	\end{tikzcd}; \]
	we've already established that $[\al_1]_k$ defines a map between the top two
	spaces, and it clearly also defines one between the bottom two spaces
	because $\Ga(N)$ is normal in $\SL_2(\Z)$. The diagram evidently commutes
	because the operator $[\al_1]_k$ is defined independently of the ambient
	space it's used on. To prove that $T_m$ is compatible with the top map
	$[\al_1]_k$, it's sufficient to prove it's compatible with the bottom one
	and use compatibility of the vertical inclusions. 
	\par
	So we want to prove that for $m\ee 1\md N$, the endomorphisms $T_m$ and
	$[\al_1]_k$ on $S_k(\Ga(N))$ commute. For this, note that by definition
	$\De(N)_m$ is all matrices 
	\[ \de = \M{cc}{ A & B \\ C & D } \ee \M{cc}{ 1 & 0 \\ 0 & * } \md N 
			\qq\qq AD - BC = m; \]
	if $m \ee 1\md N$ then this forces $D \ee 1 \md N$ and thus $\de \ee I \md
	N$. Then conjugating such a $\de$ by $\al_1$ gives another matrix congruent
	to $I$ modulo $N$, and we conclude that conjugation by $\al_1$ is an
	automorphism of $\De(N)_m$. Thus if $\De(N)_m = \dU \Ga(N)\be_i$ is a coset
	decomposition, conjugating gives that $\De(N)_m = \dU
	\Ga(N)\al_1\1\be_i\al_1$ is also one, and $T_m$ can be written in terms of
	either, and thus 
	\[ T_m(f|[\al_1]_k) = m^{k/2-1} \s_i f|[\al_1]|_k[\al_1\1\be_i\al_1]_k
			= m^{k/2-1} \s_i f|[\be_i]_k[\al_1]_k = (T_m f)|[\al_1]_k. \QED \]
\end{proof}

Putting things together we have: 

\begin{thm} \label{thm:TranslatingPreservesSomeHeckeOperators}
	The operator $[\al_h]_k : S_k(N,\ch) \to S_k(\Ga_1(Nh))$ is compatible with
	the Hecke operators $T_m$ defined on both spaces for $m\ee 1\md N$. Thus, if
	$f_0 = \s a_n q^n \in S_k(N_0,\ch)$ is a newform of level $N_0|N$ and $f
	\in S_k(N,\ch)$ is anything lying in the corresponding prime-to-$N$
	eigenspace, then $f|[\al_h]_k$ satisfies $T_m(f|[\al_h]_k) = a_m
	f|[\al_h]_k$ for $m \ee 1\md N$. 
\end{thm}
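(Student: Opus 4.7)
The plan is to factor $[\al_h]_k$ into the two pieces whose Hecke-compatibility has already been analyzed. Using the factorization $\al_h = \al_1 \ta_h$ and the right-action convention $(f|[\al]_k)|[\be]_k = f|[\al\be]_k$, I would view $[\al_h]_k : S_k(N,\ch) \to S_k(\Ga_1(Nh))$ as the composition of $[\al_1]_k : S_k(N,\ch) \to S_k(\Ga_1(N,h))$ followed by $[\ta_h]_k : S_k(\Ga_1(N,h)) \to S_k(\Ga_1(Nh))$. The proposition immediately preceding the theorem gives that the first map intertwines $T_m$ for all $m \ee 1 \md N$, and the preceding lemma gives that the second intertwines $T_m$ for every $m$ with $(m,N)=1$. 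Since any $m \ee 1 \md N$ is automatically coprime to $N$, both intertwining identities apply simultaneously, and composing them proves $T_m(f|[\al_h]_k) = (T_m f)|[\al_h]_k$ for every $m \ee 1 \md N$.

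For the second part of the statement, I would unwind the definition of the prime-to-$N$ eigenspace: the assumption that $f$ lies in the eigenspace associated to the newform $f_0 = \s a_n q^n$ means by definition that $T_m f = a_m f$ for every $m$ coprime to $N$. Restricting this to $m \ee 1 \md N$ and feeding it into the intertwining identity just established yields $T_m(f|[\al_h]_k) = (T_m f)|[\al_h]_k = a_m \dt f|[\al_h]_k$, as wanted.

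The only step requiring any care is purely bookkeeping: I should confirm that the target $S_k(\Ga_1(N,h))$ of $[\al_1]_k$ really is the source of $[\ta_h]_k$, so that the two intertwining relations can be composed without any intermediate inclusion or change of space clouding the meaning of the operators $T_m$ on either side. Beyond this verification, the theorem is essentially an immediate corollary of the two preceding compatibility results, so I do not anticipate any substantive obstacle.
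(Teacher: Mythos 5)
Your proposal matches the paper's argument exactly: the theorem is stated right after the lemma on $[\ta_h]_k$ (compatible with $T_m$ for $(m,N)=1$) and the proposition on $[\al_1]_k$ (compatible with $T_m$ for $m\equiv 1\pmod N$), and the paper treats it as an immediate consequence of composing the two under the same factorization $\al_h=\al_1\ta_h$ you identify. Your closing bookkeeping remark is also reflected in the paper's displayed diagram, which explicitly exhibits $S_k(\Ga_1(N,h))$ as the intermediate space.
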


So if we start with $f$ an eigenform of the prime-to-$N$ Hecke algebra on
$S_k(N,\ch)$ (associated to a newform $f_0 = \s a_n q^n$ but perhaps itself
an oldform), then $f|[\al_h]_k$ is a ``partial'' eigenform lying in the
subspace \[ \{ g\in S_k(\Ga_1(Nh)) : T_m(g) = \la_m g, m\ee 1\md N \}. \] This
subspace breaks up as a direct sum of prime-to-$N$ eigenspaces, each of which
is associated to some newform $g_{0,i}$. The next theorem lets us pin down
these $g_{0,i}$'s as being twists of $f$. 

\begin{thm} \label{thm:PartiallyAgreeingNewformsAreTwists}
	Suppose $f_0 = \s a_n q^n$ is a newform, and $g_0 = \s b_n q^n$ is another
	newform such that $a_m = b_m$ for $m\ee 1\md N$. Then $g_0$ is a twist
	$f_0\ox\mu$ for some Dirichlet character $\mu$ modulo $N$.
\end{thm}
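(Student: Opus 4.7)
The plan is to combine orthogonality of Dirichlet characters modulo $N$ with the analytic theory of Rankin--Selberg $L$-functions.

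First, I would form the Dirichlet series
\[ D(s) = \s_{n \ee 1 \md N} \f{a_n \overline{b_n}}{n^s}. \]
By the hypothesis $a_n = b_n$ whenever $n \ee 1 \md N$, this equals $\s_{n \ee 1 \md N} |b_n|^2 / n^s$, a Dirichlet series with non-negative real coefficients. The standard Rankin--Selberg estimate $\s_{n \le X} |b_n|^2 \asymp X^k$, together with the fact that $n \ee 1 \md N$ picks out a positive-density residue class of integers, implies that the restricted partial sum $\s_{n \le X,\, n \ee 1 \md N} |b_n|^2$ also grows like $X^k$, so the abscissa of convergence of $D(s)$ equals $k$. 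Landau's theorem for Dirichlet series with non-negative coefficients then guarantees that $D(s)$ has a singularity at $s = k$.

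Next, I would apply orthogonality of Dirichlet characters modulo $N$ to rewrite
\[ D(s) = \f{1}{\phi(N)} \s_{\mu \md N} D_\mu(s), \qq D_\mu(s) := \s_{(n,N)=1} \f{\mu(n) a_n \overline{b_n}}{n^s}, \]
with $\mu$ running over all Dirichlet characters modulo $N$. Each $D_\mu(s)$ is, up to a benign auxiliary Dirichlet $L$-factor that is analytic and non-vanishing at $s = k$, the Rankin--Selberg $L$-function $L(f_0 \ox \mu \x \overline{g_0}, s)$. Standard Rankin--Selberg theory for cuspidal newforms states that this $L$-function has a pole at $s = k$ precisely when the newforms $f_0 \ox \mu$ and $g_0$ coincide. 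Because $D(s)$ is singular at $s = k$ and is a finite linear combination of the $D_\mu(s)$, at least one $D_\mu(s)$ must also be singular at $s = k$, producing a Dirichlet character $\mu$ modulo $N$ with $g_0 = f_0 \ox \mu$.

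The main obstacle is the first step: one must verify that restricting to the progression $n \ee 1 \md N$ does not accidentally cancel the Rankin--Selberg pole across the different characters $\mu$. Positivity of the coefficients of $D(s)$ is essential here, since it lets Landau's theorem turn growth of partial sums directly into an analytic singularity; without positivity, the various $D_\mu$'s could conceivably conspire to cancel. A secondary technical point is the precise identification of $D_\mu(s)$ with the Rankin--Selberg $L$-function up to an auxiliary $L$-factor, but this is a routine Euler-product computation provided the primes dividing $N$ are handled with care.
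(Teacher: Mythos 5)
Your approach is correct, but it is genuinely different from the paper's. The paper argues combinatorially from multiplicativity: it defines $\mu(c+N\Z) = b_l/a_l$ on the subgroup $H$ of residue classes whose representatives $l$ with $a_l \ne 0$ are plentiful, checks (using $a_{ll''} = a_l a_{l''}$ for coprime $l,l''$ and the hypothesis $a_m = b_m$ for $m \ee 1 \md N$) that this is a well-defined multiplicative character, extends it to all of $(\Z/N\Z)^\x$, shows $b_p = \mu(p)a_p$ for all but finitely many primes, and then quotes strong multiplicity one. Your proof instead outsources the bookkeeping to the analytic theory of Rankin--Selberg $L$-functions: the positivity/Landau step and the pole criterion for $L(f_0\ox\mu \x \tilde\pi_{g_0},s)$ do all the work. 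Both routes are valid; the paper's is elementary and self-contained (its only nontrivial input is strong multiplicity one at the end), while yours is shorter to state but leans on considerably deeper machinery. Two small points worth tightening in your version: first, the claim that restricting to $n \ee 1 \md N$ preserves the order of growth $\asymp X^k$ is not automatic from ``positive density''; the cleanest justification is to run the orthogonality decomposition first and note that the $\mu$-component contributes a pole at $s=k$ exactly when $\mu$ is an inner twist of $g_0$, in which case its Dirichlet series is literally equal to the trivial-$\mu$ one (since $\mu(n)|b_n|^2 = |b_n|^2$ when $g_0\ox\mu = g_0$), so the residues reinforce rather than cancel. Second, you should note explicitly that $D_\mu(s)$ matches the Rankin--Selberg $L$-function of the genuine newform twist $f_0\ox\mu$ against $\bar g_0$ up to Euler factors at $p\mid N$ only because you have already restricted the sum to $(n,N)=1$, where the naive twist and the newform twist have the same coefficients.
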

The idea is essentially to define $\mu(m+N\Z) = b_m/a_m$ and check that this is
independent of the representative of $m$ and defines a Dirichlet character. If
we have plenty of coefficients where $a_m \ne 0$ then this makes sense and the
argument goes through easily (claims 2 and 3 below are the main idea); making
the argument go through for forms where we may have many $a_m$'s equal to zero
just requires a little more care.
\begin{proof}
	Define a subset $H \se (\Z/N\Z)^\x$ consisting of all residue classes
	$c+N\Z$ such that there exists an infinite set $\{ l_i \}$ of
	representatives of $c+N\Z$ with the $l_i$ pairwise coprime and
	satisfying $a_{l_i} \ne 0$. Note that for any given integer $L$, all but
	finitely many elements of $\{ l_i \}$ will be coprime to $L$. (In most cases
	we'd expect to be able to take infinitely many primes $p\ee c\md N$ with
	$a_p \ne 0$ as such a set). 
	\par
	\E{Claim 1: $H$ is a subgroup.} Suppose we have two residue classes $c+N\Z$
	and $c'+N\Z$ satisfying our condition, with infinite sets $\{ l_i \}$ and
	$\{ l'_j \}$. Then $\{ l_i l'_j : (l_i,l'_j) = 1 \}$ is a set of
	representatives of $cc' + N\Z$ with $a_{l_il'_j} = a_{l_i} a_{l'_j} \ne 0$
	for all of its elements, and there exists an infinite subset of it that's
	pairwise coprime (for any finite subset that's pairwise coprime, we only
	have finitely many $l_i$'s and $l'_j$'s involved, and this only throws away
	a finite list of possible things to add, so a maximal such subset must be
	infinite). 
	\par
	\E{Claim 2: We have a well-defined function $\mu : H \to \C$ given by setting
	$\mu(c+N\Z) = b_l/a_l$ for any $l \in c+NZ$ with $a_l \ne 0$}. By
	assumption there exist plenty of such $l$'s, so we need to check that if
	$l,l' \in c + N\Z$ satisfy $a_l, a_{l'} \ne 0$ then $b_l/a_l =
	b_{l'}/a_{l'}$. By claim (1) we know $H$ is a subgroup so $(c+N\Z)\1$
	satisfies our assumption, and thus we can pick $l'' \in (c+N\Z)\1$ which
	is coprime to both $l$ and $l'$. Then $ll'' \ee 1\md N$ so we have 
	\[ b_l b_{l''} = b_{ll''} = a_{ll''} = a_l a_{l''} \ne 0 \]
	giving $b_l/a_l = a_{l''}/b_{l''}$. An identical computation says
	$b_{l'}/a_{l'} = a_{l''}/b_{l''}$ too. 
	\par
	\E{Claim 3: $\mu$ is a multiplicative character on H.} For two cosets
	$c+N\Z$ and $c'+N\Z$ in $H$, by assumption we can pick representatives
	$l \in c+N\Z$ and $l' \in c'+N\Z$ with $l,l'$ coprime and $a_l,a_{l'}
	\ne 0$. Then we have 
	\[ \mu(cc') = \f{a_{ll'}}{b_{ll'}} = \f{a_l a_{l'}}{b_l b_{l'}} 
			= \mu(c) \mu(c'). \]
	\par
	\E{Claim 4: $\mu$ extends to a Dirichlet character on $(\Z/N\Z)^\x$.} Since
	we have a character $H \to \C^\x$ on a subgroup $H$ of an abelian group
	$(\Z/N\Z)^\x$, it's a general fact that we can extend it to a character of
	the full group. 
	\par
	\E{Claim 5: For every prime $p$ lying in a residue class $c + N\Z$ in $H$,
	we have $b_p = \mu(p)a_p$}. If $a_p \ne 0$ then this is immediate from
	definition of $\mu(p)$. If $a_p = 0$ then picking some $l \in (c + N\Z)\1$
	with $p\nd l$ and $a_l \ne 0$ gives $b_p b_l = b_{pl} = a_{pl} = a_p a_l$
	which forces $b_p = 0 = \mu(p) a_p$.
	\par
	\E{Claim 6: For all but finitely many primes $p$ lying in a residue class
	$c + N\Z$ not in $H$, we have $b_p = a_p = 0$.} The set of $p \in c +
	N\Z$ with $a_p \ne 0$ is certainly finite, since otherwise $c + N\Z$ would
	be in our set $H$ by definition. The set of $p$ with $b_p \ne 0$ but $a_p =
	0$ can't be as large as the order of $c + N\Z$ in $(\Z/N\Z)^\x$, since if
	we had distinct primes $p_1,\cd,p_f$ with $b_{p_i} \ne 0$ and $p_1\cd p_f
	\ee 1\md N$ then we'd have 
	\[ 0 = a_{p_1}\cd a_{p_f} = a_{p_1\cd p_f} 
			= b_{p_1\cd p_f} = b_{p_1} \cd b_{p_f} \ne 0. \]
	\par
	\E{Claim 7: The newform $f_0\ox\mu$ equals $g_0$.} By strong multiplicity
	one, it's sufficient to check that these newforms have the same coefficients
	for all but finitely many primes $p$. For primes $p\nd N$, $f_0\ox\mu$ and
	$g_0$ have $p$-th coefficients $\mu(p)a_p$ and $b_p$, respectively, and
	combining claims 5 and 6 we've verified that all but finitely many of these
	are equal. 
\end{proof}

Combining Theorem \ref{thm:TranslatingPreservesSomeHeckeOperators} and Theorem 
\ref{thm:PartiallyAgreeingNewformsAreTwists} establishes Theorem
\ref{thm:LinearCombinationOfTwists}. 
 
\section{Computing the self-Petersson inner product and comparing to the
adjoint $L$-function}
\label{sec:ComputingPeterssonInnerProducts}
\subsection{Computing the Petersson inner product numerically} 

In this section we describe how to numerically compute the Petersson inner
product of two modular forms $f,g \in S_k(N,\ch)$, given $q$-expansions of both
at $\oo$. This is done by applying a formula of Nelson \cite{Nelson2015} that
expresses the Petersson inner product in terms of the Fourier expansions of $f$
and $g$ at all cusps, combined with our methods for computing these Fourier
expansions. To start, we state the definition of the Petersson inner product
we'll be working with: 

\begin{defn}
	Let $f,g$ be two cusp forms of level $k$ (or even one cusp form and one
	modular form) for congruence subgroups of $\SL_2(\Z)$. If $\Ga$ is any
	congruence subgroup for which both are modular, we define their
	\E{normalized Petersson inner product} as 
	\[ \g{f,g} = \f{1}{\vol(\DH\q\Ga)} 
			\S_{\DH\q\Ga} f(x+iy) \L{g(x+iy)} y^k \f{dx\ dy}{y^2}. \]
\end{defn}

Here $y^{-2}dx\ dy$ is the standard volume measure on the upper half-plane. Our
normalization is by $\vol(\DH\q\Ga) = \f{\pi}{3} [\PSL_2(\Z):\LGa]$, and allows
the definition to be independent of the choice of congruence subgroup $\Ga$
that we view both forms as modular with respect to. Notation in the literature
varies, with some places defining $\g{f,g}$ without this normalizing factor,
and others simply using the index $[\PSL_2(\Z):\LGa]$ rather than the volume of
$\DH\q\Ga$. 
\par
Nelson's formula (Theorem 5.6 of \cite{Nelson2015}) applies to quite general
integrals on modular curves, and our methods for computing Fourier coefficients
at all cusps could be applied to many situations. For the purposes of this
paper we are interested in Petersson inner products, so we specialize the
formula to that case (see Example 5.7 of Nelson's paper): 

\begin{thm}[Nelson] \label{thm:PeterssonFormula}
	Suppose $f = \s_n a_n q^n$ and $g = \s b_n q^n$ are two cusp forms in
	$S_k(N,\ch)$. Then we have 
	\[ \g{f,g} = \f{4}{\vol(\DH\q\Ga)} 
			\s_s \f{h_{s,0}}{h_s} \s_{n=1}^\oo \f{a_{n,s} \Lb_{n,s}}{n^{k-1}} 
				\s_{m=1}^\oo \pf{x}{8\pi}^{k-1} \7( x K_{k-1}(x) - K_{k-2}(x) \7) 
				\qq\qq x = 4\pi m\rt{\f{n}{h_s}}, \]
	where $K_v$ is a $K$-Bessel function, $s$ runs over all cusps of $\Ga_0(N)$,
	$h_{s,0}$ is the width of that cusp for $\Ga_0(N)$, $h_s$ is the width for
	that cusp for $f$ as described in Lemma \ref{lem:WidthOfCuspForModularForm},
	and we choose a single matrix $\al_1$ taking $\oo$ to $s$ and write
	$f|[\al_{h_s}]_k = \s a_{n,s} q^n$ and $g|[\al_{h_s}]_k = \s b_{n,s} q^n$.
\end{thm}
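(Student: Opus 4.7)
The plan is to specialize Nelson's general cusp-sum identity (Theorem 5.6 of \cite{Nelson2015}) to the integrand $f(z)\overline{g(z)}y^k$, which is $\Gamma_0(N)$-invariant since $f,g$ share the weight and character. Nelson's framework decomposes an integral over $\overline{\mathbb{H}}/\Gamma$ as a sum of contributions from each cusp by introducing, for each cusp $s$, an incomplete Poincaré series whose $\Gamma_0(N)$-orbit gives an approximate partition of unity adapted to cusp neighborhoods, and then unfolding each piece.

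The first step is to fix, for each cusp $s$, a matrix $\alpha_1$ taking $\infty$ to $s$ and to write the $s$-th unfolded integral in local coordinates as
\[ \int_0^{h_{s,0}}\int_0^\infty (f|[\alpha_{h_s}]_k)(z)\,\overline{(g|[\alpha_{h_s}]_k)(z)}\, y^{k-2}\, \psi(y)\, dy\, dx, \]
where $\psi$ is the $x$-independent profile inherited from Nelson's test function, and the factor $h_{s,0}/h_s$ accounts for the discrepancy between the period $h_{s,0}$ of $\Gamma_0(N)$-invariance and the width $h_s$ implicit in the $q$-exponents of $f|[\alpha_{h_s}]_k$. I would then insert the Fourier expansions $\sum a_{n,s}q^n$ and $\sum b_{n,s} q^n$, carry out the $x$-integration via orthogonality of the exponentials $e^{2\pi i n x/h_s}$ to force $n=n'$ in the double sum, and be left with
\[ \sum_{n\ge 1} a_{n,s}\overline{b_{n,s}} \int_0^\infty y^{k-2} e^{-4\pi n y/h_s}\, \psi(y)\, dy. \]

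The main obstacle is the evaluation of this $y$-integral, together with the sum over the Fourier modes (indexed by $m$) of Nelson's test function $\psi$. For Nelson's specific choice of cutoff, the Mellin transform of $\psi$ is tractable, and each $m$-th mode contributes a factor that can be written in closed form using the integral representation $K_\nu(z) = \tfrac12\int_0^\infty t^{\nu-1}e^{-z(t+1/t)/2}dt$; an integration by parts in $t$ (or equivalently, a use of the recursion $K_\nu'(x) = -\tfrac12(K_{\nu-1}(x) + K_{\nu+1}(x))$) is what couples two consecutive Bessel indices and produces the combination $x K_{k-1}(x) - K_{k-2}(x)$. Absorbing the factor $n^{1-k}$ from rescaling $y \mapsto y\sqrt{n/h_s}$ yields the prefactor $(x/8\pi)^{k-1}$ with $x = 4\pi m\sqrt{n/h_s}$. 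Once this closed-form evaluation is in hand, what remains is bookkeeping: checking that the overall constant $4/\mathrm{vol}(\overline{\mathbb{H}}/\Gamma)$ emerges correctly from Nelson's normalization of the Petersson measure, and that the sum over $s$ in his formula matches the sum over cusps of $\Gamma_0(N)$ here.
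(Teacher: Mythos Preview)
The paper does not give its own proof of this theorem. It simply attributes the formula to Nelson, citing Theorem 5.6 of \cite{Nelson2015} for the general cusp-sum identity and Example 5.7 of that paper for the specialization to Petersson inner products. So there is no argument in the paper against which to compare your proposal; the result is treated as a black box input.

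That said, your outline is a plausible sketch of how one would extract this specialization from Nelson's framework: unfolding the $\Gamma_0(N)$-integral cusp by cusp via the incomplete Poincar\'e series in Nelson's construction, inserting the Fourier expansions of $f|[\alpha_{h_s}]_k$ and $g|[\alpha_{h_s}]_k$, collapsing the $x$-integration by orthogonality, and then identifying the remaining $y$-integral (summed over the modes $m$ of Nelson's test function) with the Bessel combination $xK_{k-1}(x)-K_{k-2}(x)$. The appearance of the ratio $h_{s,0}/h_s$ and of the argument $x = 4\pi m\sqrt{n/h_s}$ is exactly what one expects from the mismatch between the $\Gamma_0(N)$-width and the $q$-expansion width defined in Lemma \ref{lem:WidthOfCuspForModularForm}. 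If you want to flesh this out into a proof, the only place requiring real care is the identification of the radial integral with the stated Bessel expression and the verification of the overall constant $4/\vol(\DH\backslash\Ga)$; for both, the cleanest route is to follow Nelson's Example 5.7 rather than redo the computation from scratch.
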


So to apply this formula we just need to compute the Fourier expansions of $f$
and $g$ at each cusp, via our methods from Section
\ref{sec:NumericalExpansionAtCusps}. Since Bessel functions decay exponentially
in their arguments, this matches up well with our algorithms returning Fourier
coefficients with accuracy up to an exponentially decaying factor, and thus
makes it so that each term of the sum over $n$ has an absolute error on the
order of whatever magnitude we want to specify. We can implement this as
follows:

\begin{algo}[Petersson inner product of two modular forms of level $N$]
\label{algo:PeterssonForCuspFormsSameLevel}
	Let $f,g \in S_k(N,\ch)$ be two cusp forms of the same level and character.
	Then we can compute their Petersson inner product to an approximate accuracy
	of $10^{-E}$ as follows: 
	\begin{itemize}
		\item List all of the cusps $s$ of $\Ga_0(N)$, the widths $h_{s,0}$ for
			$\Ga_0(N)$, and their widths $h_s$ of Lemma
			\ref{lem:WidthOfCuspForModularForm}. 
		\item Iterate over cusps $s$, and for each do the following:
		\begin{itemize}
			\item Iterate over $n$ and compute the inner sum over $m$ that
				involves Bessel functions (we'll denote this sum $S_{s,n}$); each
				sum over $m$ can be truncated when the terms get some safe factor
				smaller than $10^{-E}$. Record these sums $S_{s,n}$ for each $n$,
				until we reach some $n_s$ where $S_{s,n}$ is a safe factor smaller
				than $10^{-E}$.
			\item Use one of our previous algorithms to compute the Fourier
				expansions of $f$ and $g$ at the cusp $s$, with absolute accuracy
				$10^{-E}$, relative decay $C$ chosen so that $e^{-Cn} \ge S_n$
				for all $n$, and number of terms desired equal to the number of
				terms $n_s$ we found in the previous step. (Of course for the cusp
				$\oo$ we can skip this and use the Fourier expansion directly).
			\item Compute the products $a_{n,s} \Lb_{n,s}/n^{k-1} \dt S_{s,n}$ and
				sum them up from $n = 1$ to $n_s$. This is the contribution of the
				cusp $s$ to our formula for the Petersson inner product.
		\end{itemize}
		\item Add up the contributions for all cusps $s$, and normalize by the
			constant at the front of the formula.
	\end{itemize}
\end{algo}

For the case we're ultimately interested in, we'll work with three modular
forms natively of different levels; there we want to compute Fourier expansions
for each at their native level to avoid any redundant computation. This is
discussed in Section \ref{sec:EigenformsDifferentLevels}. For modular forms
natively of the \E{same} level, the main case of interest is when $f = g$ are
the same newform; in this case the self-Petersson inner product $\g{f,f}$ is
related to an adjoint $L$-value. 
\par
In fact the standard way to compute $\g{f,f}$ is by way of computing this
$L$-value instead, and we cannot claim our algorithm will be a better way.
Instead, we can use the relation of $\g{f,f}$ with the special value $L(\ad
f,1)$ to provide some numerical verification that Algorithm
\ref{algo:PeterssonForCuspFormsSameLevel}, serving as an introduction to the
sort of comparisons we'll be making in Section \ref{sec:ExplicitIchino}. 


\subsection{Comparing with adjoint $L$-values}
\label{sec:ComparisonWithAdjointLValues}

It is well-known that if $f$ is a newform, its self-Petersson product $\g{f,f}$
is related to a value of the adjoint $L$-function associated to $f$ (or of its
shift, the symmetric square $L$-function for $f$). This is found in papers of
Shimura and Hida (see \cite{Shimura1976}, Section 5 of \cite{Hida1981}, and
Section 10 of \cite{Hida1986}), following ideas going back to Petersson; if one
considers the automorphic adjoint $L$-function $L(\ad f,s)$ defined correctly
at all factors and uses the normalization of the Petersson inner product we do,
the identity can be written as 
\[ L(\ad f,1) = \f{\pi^2}{6} \f{(4\pi)^k}{(k-1)!} \g{f,f} \p_p (*)_p \]
where $(*)_p$ is an explicit factor for primes $p$ dividing the level of $N$
which we will describe soon. In this section we'll recall how to numerically
compute the adjoint $L$-value, and then show several examples where we
numerically compare both sides of this formula and see that our method for
computing $\g{f,f}$ returns the correct results. 
\par
An efficient algorithm for computing values of $L$-functions has been given by
Dokchitser \cite{Dokchitser2004}, which is implemented in SageMath
\cite{sagemath}. This algorithm relies on the functional equation for the
$L$-function in question, and thus requires knowledge of various parameters for
the functional equation in addition to the coefficients (or equivalently, the
Euler factors) of the $L$-function itself. In the case of $L(\ad f,s)$ some of
the parameters are easy: the weight is $1$ (i.e. the functional equation
relates $s$ and $1-s$), the gamma factor is
$\Ga\pf{s+1}{2}\Ga\pf{s+k-1}{2}\Ga\pf{s+k}{2}$, and the sign $\ep$ is always
$+1$. The analytic conductor $N_{\ad}$ of the functional equation, however, is
more subtle to determine. 
\par
The determination of the analytic conductor $N_{\ad} = \p_p N_{\ad,p}$ is a
local problem that needs to be solved at each bad prime $p$, as is the
determination of the correct Euler factor $L_p(\ad f,s)$ and the correction
factor $(*)_p$ in our formula above. This breaks into a case-by-case analysis
based on the local representation $\pi_{f,p}$ of the automorphic representation
associated with $f$. Actually, since $L(\ad f,s)$ is invariant under replacing
$f$ by a twist, the first step is to replace $f$ by a twist $g$ which is
twist-minimal and proceed with analyzing the newform $g$ which has level $N_g$
and character $\ch_g$ of conductor $N_{\ch,g}$. Let $p^r$ be the exact power of
$p$ dividing $N_g$, and $p^{r_\ch}$ the exact power of $p$ dividing
$N_{g,\ch}$. Then we have:
\begin{itemize}
	\item If $r = 0$ (i.e. $p\nd N_g$, even if we have $p|N$) our $L$-function
		is unramified: $N_{\ad,p} = 1$ and the ``good'' Euler factor is $L_p(\ad
		f,s) = L_p(\ad g,s) = (1 - \f{\al_p}{\be_p} p^{-s})\1 (1 -
		\f{\be_p}{\al_p} p^{-s})\1 (1 - p^{-s})\1$ where $\al_p,\be_p$ arise from
		$(X^2 - a_p X + \ch(p) p^{k-1}) = (X - \al_p)(X - \be_p)$. 
	\begin{itemize}
		\item If $p\nd N$ (i.e. $f$ is twist-minimal at $p$) then $p$ is a good
			prime so $(*)_p$ doesn't need to be defined, but if $p|N$ ($f$ is not
			twist-minimal at $p$) we have $(*)_p = (1+1/p) L_p(\ad f,1)$. 
	\end{itemize}
	\item If $r = 1$ but $r_\ch = 0$, the local representation of $g$ at $p$ is
		an unramified special representation and we have $N_{\ad,p} = p^2$, 
		$L_p(\ad f,s) = (1 - \f{1}{p} p^{-s})\1$. 
	\begin{itemize}
		\item If $p\| N$ ($f$ is twist-minimal at $p$) then $(*)_p =
			(1+\f{1}{p})$, while if $p^2 | N$ ($f$ is not twist-minimal at $p$)
			then $(*)_p = (1+\f{1}{p})(1-\f{1}{p^2})\1$.
	\end{itemize}
	\item If $r = r_\ch \ge 1$, the local representation of $g$ at $p$ is a
		half-ramified principal series and we have $N_{\ad,p} = p^{2r}$ and
		$L_p(\ad f,s) = (1 - p^{-s})\1$. 
	\begin{itemize}
		\item If $p^r \| N$ ($f$ is twist-minimal at $p$) then $(*)_p =
			(1+\f{1}{p})$, while if $p^{r+1} | N$ ($f$ is not twist-minimal at
			$p$) then $(*)_p = (1+\f{1}{p})(1-\f{1}{p})\1$.
	\end{itemize}
	\item If $r \ge 2$ and $r > r_\ch$, $\pi_{g,p}$ is supercuspidal. At this
		point it becomes harder to give a clean description of all of our
		quantities, but $N_{\ad,p} = p^e$ for some $e \le 2r$, and the
		$L$-function splits up into two cases: 
		\begin{itemize}
			\item If $\pi_{g,p} \iso \et\ox\pi_{g,p}$ for $\et$ the unramified
				quadratic character of $\Q_p^\x$, then $L(\ad f,s) = (1 +
				p^{-s})\1$ and $(*)_p = 1$.
			\item If $\pi_{g,p} \niso \et\ox\pi_{g,p}$, then $L(\ad f,s) = 1$ and
			$(*)_p = (1+1/p)$.
		\end{itemize}
\end{itemize}

In the supercuspidal case we have not described how to full determine
$N_{\ad,p}$ nor how to determine if $\pi_{g,p} \iso \et\ox\pi_{g,p}$, though in
principle this can be done by the algorithm of Loeffler-Weinstein
\cite{LoefflerWeinstein2012} which explicitly determines $\pi_{g,p}$. In the
case of central trivial character, Nelson-Pitale-Saha
\cite{NelsonPitaleSaha2014} give a finer characterization of the conductor in
Proposition 2.5. In any case we remark that Dokchitser's algorithm gives a way
to numerically check the functional equation for any guesses of $N_{\ad,p}$ and
$L(\ad f,s)$, so one can always recover the correct values that way.
\par
We give some examples of the resulting computations and comparisons. For $f_1 =
\De$, the $\De$-function of weight 12 and level 1 (which has no bad places), we
compute
\[ \f{L(\ad f_1,1)}{\g{f_1,f_1}} 
	\ap \f{0.6317929457\ld}{9.8869793538\ld\dt 10^{-7}} \ap 639015.136088\ld
	\ap \f{\pi^2}{6} \f{(4\pi)^{12}}{11!}. \]
For $f_2 = q - 6q^2 + 9q^3 + 4q^4 + 6q^5 + \cd$ the unique newform of weight 6
and level 3 with trivial character, the local representation at $3$ is special
and we get 
\[ \f{L(\ad f_2,1)}{\g{f_2,f_2}} 
	\ap \f{0.9879391307\ld}{0.00001372666446\ld} \ap 71972.2648922\ld
	\ap \f{\pi^2}{6} \f{(4\pi)^6}{5!} \b( 1 + \f{1}{3} \e) . \]
On the other hand, the twist $f'_2 = q + 6q^2 + 4q^4 - 6q^5 + \cd$ of weight 6
and level 9 has the same $L$-value but the Petersson inner product differs
\[ \f{L(\ad f'_2,1)}{\g{f'_2,f'_2}} 
	\ap \f{0.9879391307\ld}{0.00001220147952\ld} \ap 80968.7980038\ld
	\ap \f{\pi^2}{6} \f{(4\pi)^6}{5!} \b( 1+\f{1}{3} \e) \b(1+\f{1}{9}\e)\1 . \]
The example $f_3 = q - 4q^3 - 2q^5 + \cd$ of weight 4, level 8, and trivial
character has a supercuspidal local component at $p = 2$. By Proposition 2.5
of \cite{NelsonPitaleSaha2014} we know $N_{\ad,2} = 16$, $L_2(\ad f,1) = 1$,
and $(*)_2 = 1+\f12$. Here the computation gives 
\[ \f{L(\ad f_3,1)}{\g{f_3,f_3}} 
	\ap \f{0.8047560912\ld}{0.0000784759013\ld} \ap 10254.8180648\ld
	\ap \f{\pi^2}{6} \f{(4\pi)^4}{3!} \b( 1 + \f{1}{2} \e) . \]
For the newform $f_4 = q + 6\rt{10} q^2 + 232q^4 - 96\rt{10}q^5 + \cd$ of
weight 8, level 9, and trivial character we can find (either by a computation
via Loeffler-Weinstein's algorithm, or by trial and error with the $L$-function
parameters) find that $\pi_{f,3}$ is isomorphic to its twist by $\et$ and we
have $N_{\ad,3} = 9$, so $L_3(\ad f,1) = (1+1/3)\1$ and $(*)_3 = 1$ and sure
enough 
\[ \f{L(\ad f_4,1)}{\g{f_4,f_4}} 
	\ap \f{1.6698026860\ld}{8.2275074570\ld\dt 10^{-6}} \ap 202953.652096\ld
	\ap \f{\pi^2}{6} \f{(4\pi)^8}{7!} . \]


\subsection{Comments on computing minimal twists} 
\label{sec:ComputingMinimalTwists}

Thus far, all of our algorithms have appeared to achieve our goal of avoiding
ever working with full spaces of modular forms of a given weight, level, and
character. Instead, if we are given the $q$-expansion of a modular form $f$, we
have at worst needed to work with a collection of twists of it. However, there
is a bit of a caveat to this: to correctly find all of the twists of $f$ and
their levels, we need to start with a \E{minimal} twist of $f$. 
\par
In practice, for most cases we work with $f$ will either be twist-minimal in
the first place, or we will have specifically picked it out as a twist of a
lower-level form. But in general a minimal twist needs to be searched for. We
do this by a brute-force search of lower-level modular forms, and
Loeffler-Weinstein \cite{LoefflerWeinstein2012} have a more sophisticated
algorithm. Both of these approaches involve computing full spaces of modular
forms, however, and it would be desirable to have an algorithm that doesn't.
\par
One approach we could consider taking would be to start with $f$ of some level
$N$, take its naive twists $f_\mu$, and then check numerically if $f_\mu$ is
actually of some smaller level; since $f_\mu$ is automatically modular under
some $\Ga_0(N')$, to check modularity under any $\Ga_0(M)$ we'd just need to
check whether it transforms correctly under
\[ \M{cc}{ 1 & 0 \\ M & 1 }. \]
It would be straightforward to check it the transformation rule appears to hold
numerically for a handful of points. This would not provide a \E{proof} that
$f_\mu$ is modular of our lower level, but in the spirit of the numerical
computations in this paper it would be a strong justification. 
\par
The hole in this strategy is that it only checks modularity of the naive twist
$f_\mu$ but we know in some cases the true twist $f\ox\mu$ will have extra
Fourier coefficients at bad primes that were  ``twisted away'' in $f$. To deal
with all cases, we would need a way to recover the lost coefficients of $f$ at
bad primes, either theoretically or numerically. We are not sure if there is a
known way to do this, and in any case have not pursued it since the brute-force
approach is sufficient for the cases we want to handle.


\subsection{Computing a ratio of Petersson inner products}
\label{sec:PeterssonRatio}

One feature of our method for computing Fourier expansions, and thus Petersson
inner products, is that it doesn't require the modular forms involved to be
newforms. Even with the method described in \ref{sec:LeastSquaresEigenbasis},
we can take $f$ to be any oldform associated to a newform $f_0$ and work with
$f$ directly, only needing to use $f_0$ itself to determine a basis for the
space $f|[\al_h]_k$ lies in. This is useful for our purposes of numerically
verifying computations made in \cite{Collins2016}, as some of these
calculations involve taking a newform $h$ and relating $\g{h,h}$ to
$\g{h',h''}$ where $h',h''$ are particular oldforms associated to $h$. We give
a few examples of computations verifying such calculations here, illustrating a
simpler version of the more complex comparisons needed to be made in
\cite{Collins2016}.
\par
For instance, in Section 6.2 of \cite{Collins2016} we calculate the formula 
\[ \f{\g{h(pz),h(z)}}{\g{h(z),h(z)}} = \f{a_p}{p^{m-1}(p+1)} \]
when $h(z) = \s a_n q^n$ is a weight-$m$ eigenform the prime-to-$p$ Hecke
operator $T(p)$. We can then numerically check this in the case $h = \De$ is
the $\De$-function and $p = 11$ (so $a_p = 534612$), where we get 
\[ \f{\g{\De(11z),\De(z)}}{\g{\De(z),\De(z)}} 
	\ap \f{1.5438373630\ld\dt 10^{-13}}{9.8869793538\ld\dt 10^{-7}} 
	\ap 1.5614853715\ld\dt 10^{-7} \ap \f{534612}{11^{11}\dt 12}. \]
This formula was used as an intermediate in \cite{Collins2016} for computations
with $p$-stabilizations of a $p$-ordinary form $h$ (one where $a_p$ is not
divisible by $p$). If we let $\al_p$ and $\be_p$ be the roots of the Hecke
polynomial for $a_p$ such that $\al_p$ is a $p$-adic unit for a given embedding
$\L\Q \into \C$ and $\be_p$ is not, then one can define the $p$-stabilization
as $h^\sh(z) = h(z) - \be_p h(pz)$ and also $h^\ft = h(z) - \al_p h(pz)$. We
then calculated that $h^\na(z) = h(z) - p\be_p h(pz)$ is orthogonal to $h^\ft$
under the Petersson inner product, which allowed us to realize ``projection
onto $h^\sh$'' as a scalar multiple of the functional $\g{-,h^\na}$, and proved
the following formula
\[ \f{\g{h^\sh,h^\na}}{\g{h,h}} 
		= \f{(-\al/\be)(1 - \be/\al)(1 - p\1 \be/\al)}{(1+p\1)}. \]
This ratio of arises when determining removed Euler factors in the $p$-adic
$L$-functions we were working with.
\par
In our example of $h = \De$ and $p = 11$ (the smallest prime for which $\De$ is
$p$-ordinary), we take $\al,\be$ to be the roots $(a_{11}\pm\rt{a_{11} - 4\dt
11^{11}})/2$ and we can numerically compute that $\g{\De^\ft,\De^\na} \ap 0$
and moreover that 
\[ \f{\g{\De^\sh,\De^\na}}{\g{\De,\De}} 
	\ap \f{1.4821834825\ld\dt 10^{-6} + 7.1394620388\ld\dt 10^{-7} i}
			{9.8869793538\ld\dt 10^{-7}}
	\ap 1.4991267095\ld-0.7221075096\ld i \]
which does indeed agree with the expected ratio above. 
 
\section{Inner products involving three eigenforms} 
\label{sec:InnerProductsThreeEigenforms}
\subsection{Working with eigenforms of different levels}
\label{sec:EigenformsDifferentLevels}

In this section we give examples of computations involving Petersson inner
products of the form $\g{fg,h}$ where $f,g,h$ are three modular forms of levels
$k$, $m-k$, and $m$ for $0 < k < m$; thus the product $fg$ is of weight $m$ and
it makes sense to pair it with $h$. We will generally also assume they satisfy
$\ch_f \ch_g = \ch_h$ since otherwise the inner product is trivially zero. 
\par
Once again, our general setup will be that we are given the $q$-expansions of
$f$, $g$, and $h$ at infinity. Since the $q$-expansion of $fg$ is just the
product of the $q$-expansions of $f$ and $g$, we can apply Theorem
\ref{thm:PeterssonFormula}, which we can write out explicitly as follows. 

\begin{thm}
	Suppose $k,m$ are integers satisfying $0 < k < m$, and $f = \s_n a_n q^n \in
	S_k(N_f,\ch_f)$, $g = \s b_n q^n \in S_{m-k}(N_g,\ch_g)$, and $h = \s c_n
	q^n \in S_m(N_h,\ch_h)$ are three cusp forms. Set $N = \lcm(N_f,N_g,N_h)$.
	Then we have 
	\[ \g{fg,h} = \f{4}{\vol(\DH\q\Ga)} 
			\s_s \f{h_{s,0}}{h_s} \s_{n=1}^\oo 
				\b( \s_{i=1}^{n-1} a_{i,s} b_{n-i,s} \e) \f{\Lc_{n,s}}{n^{k-1}} 
			\s_{m=1}^\oo \pf{x}{8\pi}^{k-1} \7( x K_{k-1}(x) - K_{k-2}(x) \7) \]
	setting $x = 4\pi m\rt{\f{n}{h_s}}$. Again $K_v$ is a $K$-Bessel function,
	$s$ runs over all cusps of $\Ga_0(N)$, $h_{s,0}$ is the width of that cusp
	for $\Ga_0(N)$, and $h_s$ is a common width such that if we fix a matrix
	$\al_1$ taking $\oo$ to $s$ then $f|[\al_{h_s}]_k = \s a_{n,s} q^n$,
	$g|[\al_{h_s}]_{m-k} = \s b_{n,s} q^n$, and $h|[\al_{h_s}]_m = \s c_{n,s}
	q^n$ all have integer exponents of $q$.
\end{thm}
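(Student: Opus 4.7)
The plan is to reduce the statement to a direct application of Theorem~\ref{thm:PeterssonFormula}. First I view $f$, $g$, and $h$ as elements of $S_k(N,\chi_f)$, $S_{m-k}(N,\chi_g)$, and $S_m(N,\chi_h)$ respectively by embedding each into the (potentially larger) level $N = \mathrm{lcm}(N_f,N_g,N_h)$ via the natural inclusion of oldforms. Then the product $fg$ lies in $S_m(N,\chi_f\chi_g) = S_m(N,\chi_h)$, so $fg$ and $h$ are two cusp forms in the same space $S_m(N,\chi_h)$ and the hypotheses of Theorem~\ref{thm:PeterssonFormula} apply to the pair $(fg,h)$ with weight $m$.

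The main computational content is identifying the Fourier coefficients of $fg$ at each cusp in terms of the individual Fourier coefficients of $f$ and $g$ at that cusp. The key observation is that the slash operator is multiplicative in the weight: for any $\alpha = \bigl(\begin{smallmatrix} a & b \\ c & d\end{smallmatrix}\bigr) \in \mathrm{SL}_2(\mathbb{Z})$ one checks directly that
\[ (fg)|[\alpha]_m(z) = (cz+d)^{-m}(fg)(\alpha z) = (cz+d)^{-k}f(\alpha z) \cdot (cz+d)^{-(m-k)}g(\alpha z) = (f|[\alpha]_k)(g|[\alpha]_{m-k})(z), \]
and the same identity holds when $\alpha$ is replaced by $\alpha_{h_s} = \alpha_1 \tau_{h_s}$ since the scaling factors $h_s^{k/2}$ and $h_s^{(m-k)/2}$ combine to $h_s^{m/2}$. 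Thus if $f|[\alpha_{h_s}]_k = \sum a_{n,s}q^n$ and $g|[\alpha_{h_s}]_{m-k} = \sum b_{n,s}q^n$, then $(fg)|[\alpha_{h_s}]_m$ has $q$-expansion given by the Cauchy product $\sum_n \bigl(\sum_i a_{i,s}b_{n-i,s}\bigr)q^n$. Since $f$ and $g$ are cuspidal, $a_{0,s} = b_{0,s} = 0$, so the inner sum collapses to $\sum_{i=1}^{n-1} a_{i,s}b_{n-i,s}$ as stated.

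For the widths, note that Theorem~\ref{thm:PeterssonFormula} applied to $fg$ and $h$ in $S_m(N,\chi_h)$ produces a cusp width $h_s^\star$ determined by Lemma~\ref{lem:WidthOfCuspForModularForm} applied to $\chi_h$. One now checks that any common $h_s$ that makes $f|[\alpha_{h_s}]_k$, $g|[\alpha_{h_s}]_{m-k}$, and $h|[\alpha_{h_s}]_m$ all have integer exponents in their $q$-expansions (for instance, the least common multiple of the three individual widths) is a valid choice: taking a larger $h_s$ than $h_s^\star$ just refines the $q$-expansion of $fg|[\alpha_{h_s}]_m$ to possibly have zeros in its coefficients at integer exponents that are not multiples of $h_s/h_s^\star$, which does not affect the correctness of the Cauchy-product formula. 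Substituting into Theorem~\ref{thm:PeterssonFormula} then yields the stated identity.

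The only mild obstacle is the bookkeeping around the widths, since $h_s$ as defined in the statement is allowed to be any common value rather than the cusp-minimal one for $fg$; but since both Nelson's formula and the Cauchy product are unaffected by taking a refinement of the expansion (padding with zero coefficients on non-multiples of the refinement ratio), no issue arises. No genuinely new input beyond Theorem~\ref{thm:PeterssonFormula} and the slash-multiplicativity identity is required.
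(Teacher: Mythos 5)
Your proposal is correct and follows the same route the paper takes: the paper does not give a separate proof here but simply notes that the $q$-expansion of $fg$ is the Cauchy product of the $q$-expansions of $f$ and $g$ (at every cusp, by the same slash-multiplicativity you record) and then specializes Theorem~\ref{thm:PeterssonFormula} to the pair $(fg,h)$ of weight $m$ over level $N$. One small imprecision in your width discussion: replacing a valid width $h_s^\star$ by a multiple $h_s = r h_s^\star$ does not merely pad the $q$-expansion with zeros, it also rescales the surviving coefficients by $r^{m/2}$ via the determinant factor in $[\tau_{h_s}]_m$; nonetheless, the extra factors of $r$ cancel against the $h_s$ and $n^{k-1}$ appearing in Nelson's formula, so your conclusion that the formula is insensitive to the choice of admissible common width is correct.
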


Thus we can numerically compute $\g{fg,h}$ by numerically computing the
$q$-expansions of $f$, $g$, and $h$ at each cusp of $\Ga_0(N)$ and applying
this formula just like in Algorithm \ref{algo:PeterssonForCuspFormsSameLevel}.
In practice we will want to implement this slightly differently, because
usually $N_f$, $N_g$, and $N_h$ will be distinct so we only need to compute
expansions for $f$ at the cusps of the congruence subgroup it's naturally
defined over. Doing this requires modifying our algorithm to first look over
all cusps of $\Ga_0(N)$ and note which is the most accuracy we need from each
expansion of $f,g,h$, then compute each of these expansions for the ``natural''
cusps, and finally use them to get the appropriate expansions for each cusp of
$\Ga_0(N)$ (remembering that expansions at different representatives of a
single cusp will differ as explained in Proposition
\ref{prop:ExpansionAtEquivalentCusps}). 
\par
A first application of the above methods is to verify the computations in
Section 6.4 of \cite{Collins2016}, where we compare a Petersson inner product
$\g{fg,h}$ compared with a $p$-stabilized version $\g{fg^\sh,h^\na}$; these
give rise to the removed Euler factors at $p$ for the $p$-adic $L$-functions we
construct. The setup is similar to what was described above in Section
\ref{sec:PeterssonRatio}; we do not go into detail beyond saying that we have
run a variety of numerical examples that agree with our computed formulas
(which also agree with the conjectured form for removed Euler factors in
general).


\subsection{Numerically verifying an explicit Ichino formula}
\label{sec:ExplicitIchino}

We now turn to our main application, of offering various numerical
verifications of an explicit form of Ichino's triple-product formula needed in 
\cite{Collins2016}. Ichino \cite{Ichino2008} proved a general result about
automorphic representations on $\GL_2$, which can be applied to the case of
three holomorphic newforms $f,g,h$ (of compatible weights and characters, as
discussed previously) to obtain a formula relating $|\g{fg,h}|^2$ to the
central value of the triple-product $L$-function $L(f\x g\x\Lh,s)$. It is clear
in principle that the formula will give us an explicit constant (a certain
power of $\pi$ times an algebraic number) relating these two quantities.
However, determining the algebraic part of the constant may involve many
delicate calculations, and our goal is to provide a computational verification
of the resulting formula.
\par
Specifically, in Theorem 3.1.2 of \cite{Collins2016} we establish the following
explicit version of Ichino's formula. We remark that if $f,g,h$ are newforms
such that one of them is new at a prime $p$ and the other two are old there,
then $\g{fg,h}$ is automatically zero; the factors $M_f,M_g,M_h$ are introduced
to avoid this.

\begin{thm}
	Fix integers $m > k > 0$, and let $f \in S_k(N_f,\ch_f)$, $g \in
	S_{m-k}(N_g,\ch_g)$, and $h \in S_m(N_h,\ch_h)$ be classical newforms such
	that the characters satisfy $\ch_f \ch_g = \ch_h$. Take $N_{fgh} =
	\lcm(N_f,N_g,N_h)$ and choose positive integers $M_f,M_g,M_h$ such that the
	three numbers $M_f N_f$, $M_g N_g$, $M_h N_h$ divide $N_{fgh}$ and moreover
	none of the three is divisible by a larger power of any prime $p$ than both
	of the others. Then we have 
	\[ |\g{f_{M_f}g_{M_g},h_{M_h}}|^2 
		= \f{3^2 (m-2)! (k-1)! (m-k-1)!}
					{\pi^{2m+2} 2^{4m-2} M_f^k M_g^{m-k} M_h^m} 
			L(f\x g\x\Lh,m-1) \p_{p|N_{fgh}} I_p^{**}, \]
	where $f_{M_f}(z)$ denotes $f(M_f z)$, and the constants $I_p^{**}$ are
	values of (slightly re-normalized) ``Ichino local integrals''. 
\end{thm}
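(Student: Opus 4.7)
The plan is to deduce this explicit statement from Ichino's general adelic triple-product formula by carefully identifying each local factor and tracking the normalization conventions. Let $\pi_f, \pi_g, \pi_{\bar h}$ denote the cuspidal automorphic representations of $\GL_2(\mathbb{A}_\mathbb{Q})$ attached to $f, g, \bar h$, and let $\phi_f, \phi_g, \phi_{\bar h}$ be adelic lifts of the classical newforms. Ichino's formula then expresses the square of the global trilinear period $I(\phi_f\otimes\phi_g\otimes\phi_{\bar h})$ as a product of the central $L$-value $L(\pi_f\times\pi_g\times\pi_{\bar h},1/2)$, divided by a product $L(\pi_f,\mathrm{Ad},1)L(\pi_g,\mathrm{Ad},1)L(\pi_{\bar h},\mathrm{Ad},1)$ of adjoint $L$-values, times a product of normalized local integrals. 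The first task is to translate the global period into the classical Petersson pairing $\langle f_{M_f}g_{M_g},h_{M_h}\rangle$; this requires choosing the adelic vectors $\phi$ to be translates of the newvectors by the degeneracy operators corresponding to $M_f, M_g, M_h$, which introduces explicit scalar factors.

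Next I would compute each of the three adjoint $L$-values in terms of self-Petersson inner products, using the Shimura--Hida formula recalled in Section~\ref{sec:ComparisonWithAdjointLValues}. Namely, for each newform $f_0$ of weight $k_0$ and level $N_0$, one has
\[ L(\mathrm{Ad}\,f_0,1) = \frac{\pi^2}{6}\cdot\frac{(4\pi)^{k_0}}{(k_0-1)!}\langle f_0,f_0\rangle\cdot\prod_{p\mid N_0}(*)_p. \]
After substituting these three identities the Petersson products $\langle f,f\rangle, \langle g,g\rangle,\langle h,h\rangle$ on the right hand side combine with the normalization of $I(\phi_f\otimes\phi_g\otimes\phi_{\bar h})^2$ to cancel, producing the explicit power of $\pi$ and the ratio of factorials appearing in the stated formula.

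The main obstacle is computing the archimedean local integral. This is a triple integral against matrix coefficients of holomorphic discrete series of weights $k, m-k, m$, and it is what supplies the combinatorial constant $3^2(m-2)!(k-1)!(m-k-1)!/(\pi^{2m+2}2^{4m-2})$ after the adjoint $L$-values are folded in. I would evaluate it either by reducing to a beta-integral in the Iwasawa coordinates or, more efficiently, by specializing the general formulas of Ichino--Ikeda and Watson for holomorphic vectors. The remaining work is bookkeeping at the non-archimedean places: for primes $p\nmid N_{fgh}$ the local integral equals the unramified value $1$ (so contributes nothing beyond the $L$-factors already extracted), while for $p\mid N_{fgh}$ one packages the local integral, the local adjoint $L$-factors, and the scalars coming from the degeneracy operators $M_f, M_g, M_h$ into the single constant $I_p^{**}$. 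The choice of the integers $M_f, M_g, M_h$ is exactly what guarantees that at each bad prime the three local vectors lie in a common $K_0(p^r)$-invariant subspace so the local integral is nonzero, and the divisibility hypothesis on the three numbers $M_f N_f, M_g N_g, M_h N_h$ is precisely what makes this possible without forcing any of them past $N_{fgh}$.
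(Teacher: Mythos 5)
The statement you are proving is not proven in this paper at all; the paper explicitly quotes it as Theorem~3.1.2 of \cite{Collins2016} (the author's companion paper) and the present paper's contribution is only to \emph{numerically verify} the formula, not to establish it. So there is no internal proof to compare your attempt against. That said, your outline matches the standard strategy used in \cite{Collins2016} and in the broader literature on explicit triple-product formulas (Watson, Ichino, Hsieh): start from Ichino's adelic formula with the central $L$-value divided by the three adjoint $L$-values, times normalized local integrals; translate the adelic trilinear period into the classical Petersson pairing with the chosen degeneracy operators; use the Shimura--Hida identity $L(\ad f_0,1)=\tfrac{\pi^2}{6}\tfrac{(4\pi)^{k_0}}{(k_0-1)!}\langle f_0,f_0\rangle\prod_p(*)_p$ (recalled in Section~\ref{sec:ComparisonWithAdjointLValues}) to convert the adjoint $L$-values into Petersson norms; evaluate the archimedean local integral to extract the factorial and $\pi$-power; and absorb the bad-prime leftovers into $I_p^{**}$.

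Two caveats worth flagging. First, your sketch is genuinely a sketch: the archimedean computation and the case-by-case ramified $p$-adic integrals are exactly where the explicit constant comes from, and you have not carried any of them out. Second, a small misallocation: the scalars from the degeneracy maps do not all get packaged into $I_p^{**}$. The theorem statement separates out the factor $M_f^{k}M_g^{m-k}M_h^{m}$ coming from the normalization of $f(M_f z)$, $g(M_g z)$, $h(M_h z)$, while $I_p^{**}$ is defined (as explained at the start of Section~\ref{sec:ExplicitIchino}) to absorb the $(*)_p$ factors from the adjoint $L$-value comparison and the classical-vs.-adelic normalization of the local integral; the dependence on the local old-vector choices affects the value of the local integral $I_p$ itself but the $M$-power scaling lives outside $I_p^{**}$ in the stated formula.
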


The bulk of the difficulty in making this completely explicit is in determining
the constants $I_p^{**}$ at the bad primes. Before starting on what is known
about this we first want to check the formula for newforms of level 1 to verify
that the other part of the constant is correct (especially the power of $2$ in
the denominator). In the case where $f,g,h$ are all of level 1 the formula
reduces to 
\[ |\g{fg,h}|^2 = \f{3^2 (m-2)! (k-1)! (m-k-1)!}
					{\pi^{2m+2} 2^{4m-2}} L(f\x g\x\Lh,m-1). \]
The simplest case to test is when $f = g$ is the $\De$-function of weight 12
and $h$ is a newform of weight 24 (there are two conjugate such newforms, but
for explicitness pick the one with $540 - 12\rt{144169}$ as the coefficient of
$q^2$). We can compute $\g{fg,h}$ by our usual algorithm, and $L(f\x
g\x\Lh,m-1)$ via Dokchitser's algorithm \cite{Dokchitser2004}. Since all of our
forms are of level 1 the conductor of this $L$-function is $1$ and all of the
Euler factors are the naive triple-product ones. (The other parameters for
Dokchitser's algorithm follow from the analytic theory of such $L$-functions
and doesn't depend on the levels: the weight is $2m-2$, the local constant is
$1$, and the gamma factors are $0$, $1$, $-k+1$, $-k+2$, $-(m-k)+1$,
$-(m-k)+2$, $-m+2$, and $-m+3$). Running this we get:
\[ \f{|\g{fg,h}|^2}{L(f\x g\x\Lh,23)} 
	\ap \f{1.2769689139\ld\dt 10^{-16}}{1.1302460925\ld}
	\ap 1.1298149335\ld\dt 10^{-16} 
	\ap \f{3^2 \dt 22!\dt 11!\dt 11!}{\pi^{50} 2^{94}}. \]
With the main constant in the formula verified, we can move on to checking
local factors $I_p^{**}$ in various cases. This local factor arises as follows
(which we explain in detail in Section 3.2 of \cite{Collins2016}): first $I_p$
is defined as a local integral of matrix coefficients of newvector of local
constituents, then it is normalized by some $L$-factors to a value $I_p^*$
(which is the standard quantity considered in the literature), and we modify it
slightly further to get the constant $I_p^{**}$ appearing in our formula.
(Specifically, in the process of making Ichino's formula explicit we get
$\g{f,f}$ on one side and $L(\ad f,1)$ on the other, and similarly for the
other two forms, so $I_p^{**}$ takes into account the factors $(*)_p$ arising
form this comparison as detailed in Section
\ref{sec:ComparisonWithAdjointLValues}). 

\subsubsection*{Case of one conductor-$p$ special representation and two
unramified representations.} The simplest nontrivial case for our local
integrals is when $\pi_{f,p}$, $\pi_{g,p}$, and $\pi_{h,p}$ (the local
representations at $p$ for our three newforms $f,g,h$) consist of two
unramified representations and one special representation of conductor $p$, in
some order. In this case the local integral was calculated by Woodbury in
\cite{Woodbury2012} to give
\[ I_p^* = \f{1}{p} \b( 1 + \f{1}{p} \e)\1 
		\qq\qq I_p^{**} = \f{1}{p} \b( 1 + \f{1}{p} \e)^{-2}. \]
Also in this case, the local factor of the $L$-function is 
\[ L_p(f\x g\x\Lh,s) = \p_{i,j=1}^2 ( 1 - \al_i \be_j \ga p^{-s} )\1 \]
where $\al_1,\al_2$ and $\be_1,\be_2$ are the roots of the Hecke polynomials at
$p$ for the two of $f,g,\Lh$ that are unramified, and $\ga$ is the coefficient
of $p$ for the one that is special. The local contribution to the conductor of
the functional equation is $p^4$. 
\par
As a numerical verification, we apply Dokchitser's algorithm to compute $L(f\x
g\x\Lh,17)$ and ours to compute $|\g{f(z)g(3z),h(z)}|^2$ where $f$ is the
unique newform of weight 6 and level 3, $g$ is the unique newform of weight 12
and level 1 (the $\De$-function), and $h$ is the unique newform of weight 18
and level 1. Running this computation gives 
\[ \f{|\g{f(z)g(3z),h(z)}|^2}{L(f\x g\x\Lh,17)} 
	\ap \f{4.7335974505\ld\dt 10^{-23}}{1.3684877005\ld}
	\ap 3.4589988997\ld\dt 10^{-23} 
	\ap \f{3^2 \dt 16!\dt 5!\dt 11!}{\pi^{38} 2^{70} 3^{12}} 
			\dt \f{1}{3} \b(1+\f13\e)^{-2} . \]
We remark for this computation (and the ones to follow), the time-intensive
part is computing the $L$-value. For a computation that resulted in about 15
decimal points of accuracy in the case above, the $L$-function algorithms built
into Sage asked for over 30000 terms of the Dirichlet series, which in turn
required finding the coefficients of the three modular forms at all primes up
to at least 30000. Using the default modular symbol methods in Sage for working
with modular forms, this took several hours on the author's laptop computer - a
lengthy computation but not one requiring special resources. 

\subsubsection*{Case of two conductor-$p$ principal series representations and
one unramified representation.} The next case we can consider is when two of
our representations are principal series of conductor $p$. We carry out this
computation in \cite{Collins2016}, and obtain the following local factors:
\[ I_p^* = \f{1}{p} \qq\qq I_p^{**} = \f{1}{p} \b( 1 + \f{1}{p} \e)^{-2}. \]
Again the conductor is $p^4$, and the local $L$-factor is  
\[ L_p(f\x g\x\Lh,s) = \p_{i=1}^2 
		(1 - \al_i \be \ga p^{-s})\1 (1 - \al_i\1 \be\1 \ga\1 p^{-s})\1 \]
where as before $\al_1,\al_2$ are the roots of the Hecke polynomial for the
one of $f,g,\Lh$ unramified at $p$ and $\be,\ga$ are the $p$-th coefficients
for the other two. 
\par
As a test of this particular case, we take $f = g$ to both be the newform $q -
2i\rt{11}q^2 + 6i\rt{11}q^3 + \cd$ of weight 6, level 5, and of the unique even
character $\ch$ of conductor 5, and again take $h = \De$. This gives 
\[ \f{|\g{f(z)g(z),h(z)}|^2}{L(f\x g\x\Lh,11)} 
	\ap \f{1.6015746784\ld\dt 10^{-16}}{1.4547492648\ld} 
	\ap 1.1009283297\ld\dt 10^{-16}
	\ap \f{3^2 \dt 10!\dt 5!\dt 5!}{\pi^{26} 2^{46}} 
			\dt \f{1}{5} \b(1+\f15\e)^{-2}. \]

\subsubsection*{Other conductor-$p$ cases.} There are a handful of other cases
to consider where all representations are of conductor $\le p$, and most have
been computed in the literature. We do not need these in the specific setup
considered in \cite{Collins2016}, but we have carried out numerical
computations as a verification of each of them as well. 
\begin{itemize}
	\item Two conductor-$p$ special and one unramified: Here the local
		conductor is $p^4$ and $L_p(f\x g\x\Lh,s) = \p_{i=1}^2 (1 - \al_i \be
		\ga p^{-s})\1 (1 - \al_i \be \ga p^{-s+1})\1$ where once again
		$\al_1,\al_2$ are the roots of the Hecke polynomial for the one of
		$f,g,\Lh$ unramified at $p$ and $\be,\ga$ are the $p$-th coefficients for
		the other two. In this case the local factor was worked out by Woodbury
		\cite{Woodbury2012} as $I_p^* = \f{1}{p}$ and $I_p^{**} = \f{1}{p} (1
		+ \f{1}{p})^{-2}$. 
	\item Three conductor-$p$ special: The local conductor is $p^5$ and 
		$L_p(f\x g\x\Lh,s) = (1-\al\be\ga p^{-s})\1(1 - \al\be\ga
		p^{-s+1})^{-2}$ where $\al,\be,\ga$ are the $p$-th coefficients of
		$f,g,\Lh$. This is the only case where the $\ep$-factor for $L_p(f\x
		g\x\Lh,s)$ is not automatically one, and is instead given by
		$-\al\be\ga/p^{m-2}$; here the local factor is also calculated by
		Woodbury \cite{Woodbury2012} as $I_p^* = (1-\ep)\f1p(1+\f1p)$ and 
		$I_p^{**} = (1-\ep) \f{1}{p} (1 + \f{1}{p})^{-2}$. 
	\item Two conductor-$p$ principal series and one conductor-$p$ special: The
		local conductor is $p^6$ and $L_p(f\x g\x\Lh,s) = (1-\al\be\ga
		p^{-s})\1(1 - \al\1\be\1\ga\1 p^{-s})$ where $\al,\be,\ga$ are the $p$-th
		coefficients of $f,g,\Lh$. In this case the local factors are computed
		by Humphries \cite{Humphries2018} giving $I_p^* = \f1p(1+\f1p)$
		$I_p^{**} = \f{1}{p} (1 + \f{1}{p})^{-2}$. 
	\item Three conductor-$p$ principal series: The local conductor is $p^6$ and 
		$L_p(f\x g\x\Lh,s) = (1-\al\be\ga p^{-s})\1(1 - \al\1\be\1\ga\1
		p^{-s})^{-2}$ where $\al,\be,\ga$ are the $p$-th coefficients of
		$f,g,\Lh$. We do not know of a place in the literature explicitly dealing
		with this case, but it should be a special case of the results of Hsieh
		\cite{Hsieh2017}. A numerical test suggests the values should be $I_p^* =
		\f1p(1+\f1p)$ and $I_p^{**} = \f{1}{p} (1 + \f{1}{p})^{-2}$. 
\end{itemize}
Surprisingly, the Ichino local integrals seem much more uniform across these
various cases when expressed in our modified normalization $I_p^{**}$ (intended
for working with classical modular forms) than the standard one $I_p^*$ coming
from the adelic formulation. We remark that the factors of $(1+\f1p)^{-2}$
arise from us normalizing our Petersson inner products by $\Vol(\DH\q\Ga)$. If
we left it unnormalized instead then the local integrals would have an even
simpler form. 

\subsubsection*{Case of one representation of conductor $\ge p^2$ and two
unramified representations.} We can also consider the general case when two of
our three representations $\pi_1,\pi_2$ are unramified. The case when the third
representation is conductor-$p$ special was discussed above, so we're left with
the case of conductor $p^c$ for $c \ge 2$.
\par
The overall condition that the product of central character is trivial forces
$\pi_3$ to have an unramified central character itself. The most interesting
case of such a $\pi_3$ is when it is supercuspidal, but there's also the
possibility a principal series (corresponding to two characters with the
product unramified) or a special representation (a twist of the conductor-$p$
one by a character with its square unramified). In all three cases, the local
$L$-factor $L_p(f\x g\x\Lh,s)$ is trivial, but the local conductor for the
$L$-function is $p^{4c}$, making the algorithms for finding the $L$-value quite
computationally intensive. 
\par
Each of these three cases needs to be analyzed separately, all of them are
considered in \cite{Hu2017}, and using our normalization $I_p^{**}$ all of them
have the same form
\[ I_p^{**} = \f{1}{p^c} \b( 1 + \f{1}{p} \e)^{-2}. \]
As before, we can check an assortment of examples for these cases. For
instance, as a test of the supercuspidal case we can take $f = q - 12q^3 +
54q^5 + \cd$ to be the unique newform of weight 6, level 4, and trivial
character, $g = \De$ the delta-function, and $h = q - 528q^2 - 4284 q^3 + \cd$
the unique newform of weight 18 and level 1. We then find 
\[ \f{|\g{f(z)g(4z),h(z)}|^2}{L(f\x g\x\Lh,17)} 
	\ap \f{4.2746854\ld\dt 10^{-25}}{0.6583584\ld} 
	\ap 6.4929462\ld\dt 10^{-25}
	\ap \f{3^2 \dt 16!\dt 5!\dt 11!}{\pi^{38} 2^{70} 4^{12}} 
			\dt \f{1}{4} \b(1+\f12\e)^{-2} . \]
We have carried out similar computations checking the special and principal
series cases as well. 

\subsubsection*{One case with two supercuspidals and one unramified
representation.} Finally, the last case we will consider is where one of the
local representations $\pi_{f,p}, \pi_{g,p}, \pi_{h,p}$ is unramified, and the
other two representations are both isomorphic to a single supercuspidal $\pi$
with trivial central character. In particular we'll consider the case where
$\pi \iso \pi\ox\et$ where $\et$ is the unramified quadratic character of
$\Q_p^\x$; this is ``type 1'' in the notation of \cite{NelsonPitaleSaha2014}.
Nelson-Pitale-Saha prove that in this case we have 
\[ I_p^* = I_p^{**} = \f{1}{p^c} \b( 1 + \f{1}{p} \e)^{-2}
	\pf{(\al^{c/2+1}-\al^{-c/2-1}) 
			- p\1 (\al^{c/2-1}-\al^{-c/2+1})}{\al-\al\1}^2, \]
where $p^c$ is the conductor of $\pi$, we assume $p^c$ is also the conductor
of $\pi\x\pi$ (which will be true in our cases of interest), and $\al,\al\1$
are the Satake parameters of the third unramified representation (so, for
example, if $f$ is the one unramified at $p$, then $\al p^{(k-1)/2}, \al
p^{(k-1)/2}$ are the roots of the Hecke polynomial for $f$ at $p$). 
\par
In this situation, the local conductor is $p^{2c}$ and the local $L$-factor
\[ L(f\x g\x\Lh,s) = (1 - \al p^{-s})\1 (1 + \al p^{-s})\1 
		(1 - \al\1 p^{-s})\1 (1 + \al\1 p^{-s})\1. \]
We can then proceed to numerical examples. Our first example will take $f = g =
q + 6\rt{10} q^2 + 232 q^4 + \cd$ to be the newform of weight 8, level 9,
trivial character, and which isn't a twist of a newform of level 3, and $h = q
+ 216 q^2 - 3348 q^3 + \cd$ is the unique newform of weight 1 and level 16. We
then numerically compute 
\begin{multline*}
\f{|\g{f(z)g(z),h(z)}|^2}{L(f\x g\x\Lh,15)} 
	\ap \f{2.1021427352\ld\dt 10^{-17}}{7.9702799221\ld} \\
	\ap 2.6374766705\ld\dt 10^{-18} 
	\ap \f{3^2 \dt 14!\dt 7!\dt 7!}{\pi^{34} 2^{62}} 
			\dt \f{1}{3^2} \b(1+\f13\e)^{-2} \dt \pf{3348^2}{3^{15}}. 
\end{multline*}
Here, $c = 2$ so the last term in $I_p^{**}$ becomes
$(\f{\al^2-\al^{-2}}{\al-\al\1})^2 = (\al+\al\1)^2$ (note the second half of
the numerator drops out since $c/2-1 = 0$), and by definition $\al+\al\1$ is
just $a_p/p^{(m-1)/2}$. 
\par
For our second example we take $f = g = q - \f{3+\rt{129}}{2} q^2 +
\f{5+3\rt{129}}{2} q^4 + \cd$ to be a newform of weight $6$, level $81$, and
trivial character (the unique such newform that isn't a twist of a lower-level
form), and $h = \De$ the delta-function. We then compute
\begin{multline*}
\f{|\g{f(z)g(z),h(z)}|^2}{L(f\x g\x\Lh,11)} 
	\ap \f{4.2156534297\ld\dt 10^{-18}}{0.8058589132\ld} \\
	\ap 5.2312549509\ld\dt 10^{-18}
	\ap \f{3^2 \dt 10!\dt 5!\dt 5!}{\pi^{26} 2^{46}} 
			\dt \f{1}{3^4} \b(1+\f13\e)^{-2} 
			\dt \b( \f{252^2}{3^{11}} - 1 - \f13 \e)^2 . 
\end{multline*}
Here $c = 4$ so the final term in $I_p^{**}$ is 
$(\f{(\al^3-\al^{-3}) - p\1(\al-\al\1)}{\al-\al\1})^2$, giving $\al^2 + 1 +
\al^{-2} - p\1 = (\al+\al\1)^2 - 1 - p\1$. 

\subsubsection*{Other cases.} The results of Nelson-Pitale-Saha,
\cite{NelsonPitaleSaha2014} and Hu \cite{Hu2017} compute Ichino local integrals
$I_p^*$ in more generality than we have discussed, and more recently Hsieh
\cite{Hsieh2017} has computed them in many more situations. We do not claim to
have checked any of these beyond what is discussed above, but in principle this
could be done by the same sorts of calculations that we have given.

\bibliographystyle{amsalpha}
\bibliography{NumericalCusps_bib.bib}

\end{document}